\newtheorem{thm}{Theorem}[section]
\newtheorem{cor}[thm]{Corollary}
\newtheorem{conj}[thm]{Conjecture}
\newtheorem{lem}[thm]{Lemma}
\theoremstyle{remark}
\newtheorem*{rem}{Remark}
\newcounter{remarkscounter}
\newenvironment{remarks}
{\medskip\noindent{\it
Remarks.}\begin{list}{{\rm(\arabic{remarkscounter})}
}{\usecounter{remarkscounter}

\setlength{\labelsep}{\fill} \setlength{\leftmargin}{0pt}
\setlength{\itemindent}{\fill}
\setlength{\labelwidth}{\fill}\setlength{\topsep}{0pt}
\setlength{\listparindent}{0pt}}} {\end{list}}
\numberwithin{equation}{section}
\newcommand{\A}{\mathbb{A}}
\newcommand{\GL}{\mathrm{GL}}
\newcommand{\ZZ}{\mathbb{Z}}
\newcommand{\QQ}{\mathbb{Q}}
\newcommand{\lto}{\longrightarrow}
\newcommand{\OO}{\mathcal{O}}
\newcommand{\CC}{\mathbb{C}}
\newcommand{\RR}{\mathbb{R}}
\newcommand{\GG}{\mathbb{G}}
\newcommand{\gl}{\mathfrak{gl}}
\newcommand{\quash}[1]{}
\theoremstyle{definition}
\renewcommand{\bar}{\overline}
\numberwithin{equation}{subsection}
\newcommand{\LL}{\mathbb{L}}
\begin{document}
\title[Nonabelian Fourier transforms]{Nonabelian Fourier transforms for spherical representations}
\author{Jayce R. Getz}
\address{Department of Mathematics\\
Duke University\\
Durham, NC 27708}
\email{jgetz@math.duke.edu}

\subjclass[2010]{Primary 11F70;  Secondary 11F66,  	22E45}

\thanks{The author is thankful for partial support provided by NSF grant DMS 1405708.  Any opinions, findings, and conclusions or recommendations expressed in this material are those of the author and do not necessarily reflect the views of the National Science Foundation.}

\begin{abstract}
Braverman and Kahzdan have introduced an influential conjecture on local functional equations for general Langlands $L$-functions.  It is related to
 Lafforgue's equally influential conjectural construction of kernels for functorial transfers.
We formulate and prove a version of Braverman and Kazhdan's conjecture for spherical representations over an archimedean field that is suitable for application to the trace formula.  We then give a global application related to Langlands' beyond endoscopy proposal.  It is motivated by Ng\^o's suggestion that one combine nonabelian Fourier transforms
with the trace formula in order to prove the functional equations of Langlands $L$-functions in general.  

\end{abstract}

\maketitle

\section{Introduction}

Let $G$ be a split connected reductive group over an archimedean field $F$ and let
\begin{equation}\label{r}
r:\widehat{G} \lto \GL_{n}
\end{equation}
be a representation of (the connected component of) its Langlands dual group, which we regard as a connected reductive group over $\CC$.  For simplicity we assume that the neutral component of the kernel of $r$ is trivial (this is the most interesting case anyway).  The local Langlands correspondence is a theorem of Langlands in the archimedean case.  Thus
for every irreducible admissible representation $\pi$ of $G(F)$ one has an irreducible admissible representation $r(\pi)$ of $\GL_n(F)$ that is the 
 transfer of (the $L$-packet of) $\pi$. One defines
$$
\gamma(s,\pi,r,\psi):=\gamma(s,r(\pi),\psi):=\frac{\varepsilon(s,r(\pi),\psi) L(1-s,r(\pi)^\vee)}{L(s,r(\pi))}
$$
for $s\in \CC$ and for any additive character $\psi:F \to \CC^\times$, where the $\varepsilon$-factor on the right is that defined by Godement and Jacquet \cite{GodementJacquetBook}.    Let $f \in C_c^\infty(G(F))$.  In the case where $r$ is the standard representation of $\GL_n$ and the representation $\pi$ is unitary one has an identity of operators
\begin{align} \label{loc-fe-00}
\gamma(s,\pi,r,\psi)\pi|\det|^{\tfrac{n-1}{2}+s}(f)=\pi^{\mathrm{anti}}|\det|^{\tfrac{n-1}{2}+1-s}(\widehat{f})
\end{align}
where
$\pi^{\mathrm{anti}}(g):=\pi(g^{-1})$ and $\widehat{f}$ is the restriction to $\GL_n(F)$ of the $\gl_n(F)$-Fourier transform of $f$ determined by $\psi$ (see \cite[(9.5)]{GodementJacquetBook}).  Strictly speaking $f$ is assumed to be in a space of Gaussian functions in \cite{GodementJacquetBook}, but there is no need to make this precise here.

A conjecture of Braverman and Kazhdan \cite{BK-lifting} states that \eqref{loc-fe-00} is but the first case of a general phenomenon.  To be more precise we need to place an additional assumption on the representation $r$.
Assume that there is a character 
\begin{align} \label{omega}
\omega: G \lto \GG_m
\end{align}
such that, 
if we denote by $\omega^{\vee}$ the dual cocharacter of the center $Z_{\widehat{G}}$ of $\widehat{G}$, then
$$
r \circ \omega^{\vee}=[N]
$$
where $[N]:\GG_m \to \GL_n$ is the 
cocharacter given on points by
$$
x \longmapsto x^NI_n.
$$  
Here $I_n$ is the $n\times n$ identity matrix. 
For complex numbers $s$ we let
$$
\omega_s:=|\omega|^{s/N} \quad \textrm{ and } \quad \pi_s:=\pi \otimes \omega_s.
$$
The quasicharacter $\omega_s$ plays the role of $|\det |^s$ in the case considered by Godement and Jacquet in \cite{GodementJacquetBook}.

Temporarily let $F$ be an arbitrary local field.  Braverman and Kazhdan gave a conjectural construction of a nonabelian Fourier transform 
\begin{align} \label{FT:BK}
\mathcal{F}_{r,\psi}:C_c^\infty(G(F)) &\lto C^\infty(G(F))
\end{align}
such that
\begin{align} \label{lfe}
\gamma(s,\pi,r,\psi)\pi_{s}(f)=\pi^{\mathrm{anti}}_{1-s}(\mathcal{F}_{r,\psi}(f)).
\end{align} 
In the case where $r$ is the standard representation of $G=\GL_n$ one can take $\mathcal{F}_{r,\psi}(f)=|\det|^{\tfrac{n-1}{2}}(|\det|^{\tfrac{1-n}{2}}f)^{\wedge}$.  In the nonarchimedean case L.~Lafforgue has given a spectral definition of $\mathcal{F}_{r,\psi}$ using Paley-Weiner theory under suitable assumptions that are implied by the local Langlands correspondence for $G(F)$ \cite[D\'efinition II.15]{LafforgueJJM}.  
The analytic properties of $\mathcal{F}_{r,\psi}(f)$ (e.g. whether or not it is integrable after a suitable twist by a quasicharacter of $G(F)$) are not obvious from 
 his construction.  

Assume again that $F$ is archimedean, and let $K \leq G(F)$ be a maximal compact subgroup.  In this paper we prove the existence of a transform $\mathcal{F}_{r,\psi}(f)$ 
such that \eqref{lfe} holds for unitary representations $\pi$ provided that $f$ is spherical and lies in a naturally defined subspace $C_c^\infty(G(F)//K,r)$ of $C_c^\infty(G(F)//K)$ depending on $r$.
  We also prove that suitable twists of $\mathcal{F}_{r,\psi}(f)$ by quasicharacters lie in a space of functions for which the trace formula is valid.  

  In \S \ref{sec-main-thm} we define the subspace $C_c^\infty(G(F)//K,r) \leq C_c^\infty(G(F)//K)$. For $0<p\leq 2$ let
$$
\mathcal{S}^p(G(F) // K) \leq C^\infty(G(F)//K)
$$
be the $L^p$-Harish-Chandra Schwartz space; we recall its definition in \S \ref{sec-sp-funcs} below.  The following is the main theorem of this paper:

\begin{thm}\label{thm-lfe} 
Let $f \in C_c^\infty(G(F)//K,r)$ and  $0<p \leq 1$.  There is 
a function
$$
\mathcal{F}_{r,\psi}(f) \in C^\infty(G(F)//K)
$$
such that 
\begin{enumerate}
\item[(a)] one has
$
\mathcal{F}_{r,\psi}(f)\omega_s \in \mathcal{S}^p(G(F)//K)
$
for $\mathrm{Re}(s)$ sufficiently large in a sense depending only on $G$ and $r$, and

\item[(b)] if $\pi$ is unitary and irreducible then 
\begin{align} \label{loc-fe-0}
\gamma(s,\pi,r,\psi)\mathrm{tr}\,\pi_{s}(f)=\mathrm{tr}\,\pi^\vee_{1-s}(\mathcal{F}_{r,\psi}(f))
\end{align}
in the sense of analytic continuation.
\end{enumerate}
\end{thm}

\begin{remarks}

\item A more precise version of (a) is proved in Theorem \ref{thm-lfe2} below.

\item Elements of $\mathcal{S}^p(G(F)//K)$ for $0<p \leq 1$ are in $L^1(G(F)//K)$, so the fact that $\pi$ is unitary implies that the operator $\pi^{\vee}_s(\mathcal{F}_{r,\psi}(f))$ is bounded for $\mathrm{Re}(s)$ sufficiently large.

\item The function $f$ and $\mathcal{F}_{r,\psi}(f)$ are assumed to be spherical, so for $\pi$ unitary and nonspherical  \eqref{loc-fe-0} is just the equality $0=0$.

\item The operator $\pi_s(f)$ is holomorphic as a function of $s$, and $\gamma(s,\pi,r,\psi)$ is meromorphic.  Thus \eqref{loc-fe-0} provides a meromorphic continuation of $\mathrm{tr}\,\pi^\vee_{1-s}(\mathcal{F}_{r,\psi}(f))$ to the complex plane.  

\end{remarks}

\bigskip

Assertion (a) in the theorem is important because the Arthur-Selberg trace formula is valid for functions in (the global version of) $\mathcal{S}^p(G(F)//K)$ for $0<p \leq 1$ 
due to work of Finis, Lapid, and M\"uller \cite{FinisLapidMullerAbsConv,FinisLapid2011,FinisLapidContinuityGeometric}.  One can then use the results of Finis, Lapid, and M\"uller to provide an absolutely convergent expression for the sum of residues of Langlands $L$-functions that Langlands has isolated for study in his ``beyond endoscopy" proposal.  This will be discussed in \S \ref{sec:app} below.

It would be very interesting to extend the results in this paper to nonspherical representations.  Our approach might be applicable in this more general setting provided one can prove a certain analytic result.  More precisely, we use the characterization of the image of $\mathcal{S}^p(G(F)//K)$ under the Fourier transform due to Trombi and Varadarajan \cite{TrombiVaradarajanSpherical}.  A characterization of the image of the nonspherical analogue $\mathcal{S}^p(G(F))$ under the Fourier transform does not seem to be available in the literature, although partial results are known \cite{TrombiHarmonic,TrombiUniform}.  Hopefully the conjectures of Braverman-Kahzdan \cite{BK-lifting},  Lafforgue's work \cite{LafforgueJJM}, and of course the beyond endoscopy proposal of Langlands \cite{LanglandsBE} will provide motivation for giving such a characterization.

\begin{rem} Arthur has characterized the image of the Fourier transform of $\mathcal{S}^2(G(F))$ \cite{ArthurPWActa}, but this does not seem to be the right space from the point of view of any of these proposals to move beyond endoscopy.  It would be interesting to see if the technique of Anker \cite{AnkerSimple} could be used to deduce the image of the Fourier transform on $\mathcal{S}^p(G(F))$ for $0<p<2$ from this case.
\end{rem}

We would be remiss not to recall that the fundamental aim of  \cite{BK-lifting} and \cite{LafforgueJJM} is to provide a  definition of the nonabelian Fourier transform for which a version of Poisson summation is valid.  As explained in loc.~cit., this would lead to a proof of 
the functional equation and meromorphic continuation of the $L$-functions attached to $r$ by Langlands.  For this purpose it would probably be desirable to have a definition of the Fourier transform $\mathcal{F}_{r,\psi}(f)$, which, unlike the approach of this paper and \cite{LafforgueJJM},  does not rely on Paley-Wiener theorems.  
For this we can only point to the hints provided in the works \cite{AltugBEI,BouthierNgoSakellaridis,
Cheng:Ngo:BK,ChengGlobalTF,FLN,Getz4var,Getz:RSMonoid,
GetzHerman,Getz:Liu:BK, WWLiSat,LanglandsTransfert,NgoSums,
SakellaridisSph}.  

We close the introduction by outlining the sections of this paper.  In \S \ref{sec-transfer} we recall the notion of the transfer of a spherical representation.  Preliminaries on the characterization of the image of the spherical Fourier transform due to Trombi and Varadarajan are given in \S \ref{sec-sp-funcs} and in \S \ref{sec-main-thm} we prove Theorem \ref{thm-lfe2}, which immediately implies our main result, Theorem \ref{thm-lfe}. Finally, in \S \ref{sec:app} we give an application of the main theorem to Langlands' beyond endoscopy proposal.

\section{Tori and transfers of representations}
\label{sec-transfer}

Let $F$ be a local field.  In this section we set some notation and recall the notion of the transfer $r(\pi)$ of an irreducible admissible representation $\pi$ of $G(F)$ under some simplifying assumptions.  These assumptions are always true if $\pi$ is spherical.  Before this we give a definition, from \cite{Cheng:Ngo:BK}, of a useful extension $W'$ of the Weyl group $W$ of a split maximal torus $T \leq G$ by a subgroup of $\mathfrak{S}_n$, the symmetric group on $n$ letters.  The whole point of the discussion below is to explain how $r$ induces a 
$W'$-equivariant map
$$
r^{\vee}:T_n \lto T
$$
where $T_n$ is a maximal torus in $\GL_n$.

Let $T \leq G$ be a split maximal torus with Weyl group $W$.  Moreover let $\widehat{T} \leq \widehat{G}$ be the dual torus; its Weyl group in $\widehat{G}$ is isomorphic to $W$ and we denote it by the same letter.  Let $V_r$ be the space of $r$.   We can decompose
\begin{align} \label{decomp}
V_r = \oplus_{i=1}^m V_{\lambda_i}
\end{align}
where the sum is over the nonzero weights $\lambda_1,\dots,\lambda_m \in X^*(\widehat{T})$ of $\widehat{T}$ in $V_r$, $V_{\lambda_i}$ is the $\lambda_i$ weight space, and $\mathrm{dim} V_{\lambda_i}=d_i$.  Fix a basis $A_{i} \subset V_r$ for each $V_{\lambda_i}$ (viewed as a subspace of $V_r$) and let $A=\coprod_{i=1}^mA_i$; this is a basis for 
$V_r$.  This choice of basis gives an embedding $\GG_m^A \to \GL(V_r)$.  
We let $\widehat{T}_n$ be its image and let $\Lambda=X^*(\widehat{T}_n)=\ZZ[A]$. It comes equipped with a $\ZZ$-linear map
\begin{align} \label{basis-map}
\Lambda \lto X^*(\widehat{T})=X_*(T)
\end{align}
given by extending the set theoretic map sending each basis element 
in $A_{i}$ to $\lambda_i$. 
Thus $r$ induces a map
\begin{align} \label{pin}
r:\widehat{T} \lto \widehat{T}_n
\end{align}
 where $\widehat{T}_n \leq \GL_n$ is the maximal torus with character group $\Lambda$.  In fact, upon conjugating the representation $r$ by an element of $\GL_n(\CC)$ we can and do assume that $\widehat{T}_n$ is the standard maximal torus of diagonal matrices. 
 For $F$-algebras $R$ we take
$$
T_n(R)=\Lambda \otimes R^\times.
$$
It is a torus over $F$ with dual torus $\widehat{T}_n$, and by construction there is a morphism
\begin{align} \label{pin:dual}
r^{\vee}:T_n \lto T
\end{align}
over $F$ whose dual is $r$.

The Weyl group $W_n$ of $\GL_n$ can be identified with the set of permutations of $A$ (which we also identify with $\mathfrak{S}_n$) and we let
\begin{align}
\Sigma_{\underline{\lambda}}=\mathfrak{S}_{r_1} \times ...\times \mathfrak{S}_{r_m} \leq \mathfrak{S}_n=W_n
\end{align}
denote the subgroup preserving the decomposition $A=\coprod_{i=1}^mA_i$ (i.e., those permutations $\sigma$ such that $\sigma(V_{\lambda_i})=V_{\lambda_i}$ for all $i$).  We let 
$$
\Sigma_{\underline{\lambda}}':=\{\tau \in W_n:\textrm{ there exists }\xi \in \mathfrak{S}_m \textrm{ such that }\tau(A_i)=A_{\xi(i)} \textrm{ for all }1 \leq i \leq m\}.
$$
The map $\tau \mapsto \xi$ implicit in this definition is in fact a homomorphism $\Sigma_{\underline{\lambda}}' \to \mathfrak{S}_m$ whose image is the set of permutations fixing the multiplicity function $i \mapsto d_i$ and whose kernel is $\Sigma_{\underline{\lambda}}$.  
The Weyl group $W$ acts on $X^*(\widehat{T})$ and this action preserves the weights $\lambda_1,\dots,\lambda_m$ and the multiplicity function 
$i \mapsto d_i$. Thus the map $W \to \mathfrak{S}_m$ induces a morphism
$$
\rho_W:W \lto \Sigma_{\underline{\lambda}}'/\Sigma_{\underline{\lambda}}
$$
(this $\rho_W$ has nothing to do with the sum of positive roots).
We define $W'$ to be the following extension of $W$:
\begin{align}
W':=\{(w,\xi) \in W \times \Sigma_{\underline{\lambda}}': \rho_W(w) \equiv \xi \pmod{\Sigma_{\underline{\lambda}}}\}.
\end{align}
The group $W'$ admits natural homomorphisms to both $\Sigma_{\underline{\lambda}}'$ and $W$ by projection to the two factors.  We therefore obtain actions of $W'$ via projection to $\Sigma_{\underline{\lambda}}'$ on $\Lambda,$ $T_n$ and $\widehat{T}_n$ and actions of $W'$ via projection to $W$ on $X^*(\widehat{T})$, $T$ and $\widehat{T}$.  One checks that \eqref{basis-map} is $W'$ equivariant with respect to these actions, and hence so are the maps \eqref{pin} and \eqref{pin:dual}.  

For unramified quasicharacters $\chi:T(F) \to \CC^\times$ extend $\chi$ to a character of a Borel subgroup $B$ containing $T$.  Let
 $J(\chi)$ be the unique irreducible spherical subquotient of the unitarily normalized induction $\mathrm{Ind}_{B(F)}^{G(F)}(\chi)$ (see Theorem \ref{thm:sph} for more details and references).

Suppose that $\pi=J(\chi)$ where $\chi:T(F) \to \CC^\times$ is an unramified quasicharacter.  One defines $r(\chi)$ to be $\chi \circ r^{\vee}$ and
one defines the transfer of $\pi$ to be
$$
r(\pi):=J(r(\chi)).
$$
  This is an irreducible admissible representation of $\GL_n(F)$.  We note that if 
$$
r(\chi)\left(\begin{smallmatrix} a_1 & & \\ & \ddots & \\ & & a_n\end{smallmatrix}\right)=\prod_{i=1}^n\eta_i(a_i)
$$
for some quasicharacters $\eta_i:F^\times \to \CC^\times$ then
$$
\gamma(s,\pi,r,\psi):=\gamma(s,r(\pi),\psi)=\prod_{i=1}^n\gamma(s,\eta_i,\psi)
$$ 
\cite[Corollary 3.6 and Corollary 8.9]{GodementJacquetBook}.

\section{Preliminaries on the Fourier transform}

In this section we collect some notation related to Langlands decompositions of Borel subgroups and recall some basic facts about the spherical Fourier transform.  All of these results will be used in \S \ref{sec-main-thm} below.
In this section $F$ is an archimedian local field.

\subsection{Langlands decompositions}
\label{ssec-Langlands-decomp}
Let $T \leq G$ and $T_n \leq \GL_n$ be maximal tori as in \S \ref{sec-transfer} (so $T_n$ is the diagonal torus, $T$ is split and $r$ maps $\widehat{T}$ into $\widehat{T}_n$).  Let $B\geq T$ be a Borel subgroup of $G$ and let $B_n \geq T_n$ be the Borel subgroup of upper triangular matrices in $\GL_n$.  We let $N \leq B$ and $N_n \leq B_n$ be the unipotent radicals.  
Let $K \leq G(F)$ be a maximal compact subgroup and let $K_n \leq \GL_n(F)$ be the standard maximal compact subgroup.  
Let 
\begin{align*}
M:=T(F) \cap K, \quad
M_n:=T_n(F) \cap K_n\,.
\end{align*} 
We can and do assume that $M$ is the maximal compact subgroup of $T(F)$, and then one has $r^{\vee}(M_n) \leq M$.  
Let $\mathfrak{a}=X_*(T) \otimes_{\ZZ} \RR,\mathfrak{a}_n:=X_*(T_n) \otimes_{\ZZ}\RR$ and let 
\begin{align*}
 \mathfrak{a}^*&=\mathrm{Hom}(T(F)/M,\RR^\times_{>0}),\\
\mathfrak{a}_n^*&=\mathrm{Hom}(T_n(F)/M_n,\RR^{\times}_{>0})
\end{align*}
be their $\RR$-linear duals.
 
 We require a norm on $\mathfrak{a}^*_\CC$.  To construct it, let $(\, ,\,)$ be a nondegenerate symmetric bilinear form on $\mathfrak{g}:=\mathrm{Lie} (\mathrm{Res}_{F/\RR}G)$ whose restriction to the derived algebra is the Killing form.  We assume that the $+1$ and $-1$ eigenspaces of the Cartan involution $\Theta$ attached to $K$ are orthogonal under $(\, ,\,)$ and that $X \mapsto -(X,\Theta X)$ is a positive definite quadratic form on $\mathfrak{g}$.  We then set $||X||^2=-( X,\Theta X )$.  It induces a hermitian inner product on $\mathfrak{g}_\CC$ and $\mathfrak{a}_\CC$ and we continue to denote by $||\cdot||$ the induced form on $\mathfrak{a}_\CC^*$.

  The map $r^\vee$ yields
$$
r:\mathfrak{a}^*_\CC \lto \mathfrak{a}_{n\CC}^*\,.
$$
One has an isomorphism
\begin{align*}
\RR^n &\tilde{\lto} \mathfrak{a}_n^*\\
(s_1,\dots,s_n) &\longmapsto \eta_{s_1,\dots,s_n}
\end{align*}
where
$$
\eta_{s_1,\dots,s_n}\left( \begin{smallmatrix} t_1 & & \\ & \ddots & \\ & & t_n\end{smallmatrix}\right):= |t_1|^{s_1}\dots|t_n|^{s_n}.
$$
We let
$$
(\mathfrak{a}_{n}^*)_+
$$
be the image of $\RR_{>0}^n$ and let
\begin{align} \label{aplus}
\mathfrak{a}^*_+:=\{ \lambda \in \mathfrak{a}^*: r(\lambda) \in (\mathfrak{a}_n^*)_+\}.
\end{align}
By the existence of $\omega$ (see \eqref{omega}) this is nonempty.  
Note that this is not a Weyl chamber.

\subsection{Spherical functions}

 We now recall the definition of the Harish-Chandra map.  We define a function $H_T:T(F)/M \lto \mathrm{Hom}_{\ZZ}(X^*(T),\RR)$ via
$$
\langle \chi,H_T(x) \rangle:=\log |\chi(x)|. 
$$
Since there is a canonical identification $\mathrm{Hom}_{\ZZ}(X^*(T),\RR)=X_*(T) \otimes_{\ZZ}\RR=:\mathfrak{a}$ we can regard $H_T$ as taking values in $\mathfrak{a}$.
For 
$(k,t,n) \in K \times T(F) \times N(F)$ the Harish-Chandra map is then defined to be
\begin{align}
H_B:G(F) &\lto \mathfrak{a}\\
ktn &\longmapsto H_T(t).\nonumber
\end{align}
We choose Haar measures $dk$, $dt$, $dn$, $dg$
 on $K$, $T(F)$, $N(F)$, and $G(F)$, respectively, such that $\mathrm{meas}_{dk}(K)=1$ and for $f \in C_c^\infty(G(F))$
$$
\int_{G(F)}f(g)dg=\int_{K \times T(F) \times N(F)}e^{\langle 2\rho,H_B(t)\rangle} f(ktn)dkdtdn
$$
where $\rho \in \mathfrak{a}^*$ is half the sum of the positive roots of $T$ in $B$.  

For $\lambda \in \mathfrak{a}^*_\CC$ we let
\begin{align} \label{spherical:func}
\varphi_\lambda(g)=\int_K e^{\langle-(\lambda+\rho), H_B(g^{-1}k)\rangle}dk
\end{align}
be the usual spherical function.  It is a matrix coefficient of the representation $\mathrm{Ind}(e^{\langle\lambda,H_B\rangle})$.  One defines the spherical transform of suitable $K$-biinvariant continuous functions $f:G(F) \to \CC$ to be
\begin{align}
\widetilde{f}(\lambda):=\int_{G(F)} f(g)\varphi_{-\lambda}(g) dg
\end{align}
(here we are using the convention of \cite[\S 1]{AnkerSimple}, at least up to multiplication by $\sqrt{-1}$).

\subsection{Spaces of functions} \label{sec-sp-funcs}
Let  $\mathfrak{g}:=\mathrm{Lie}(\mathrm{Res}_{F/\RR}G)$. 
For $0<p \leq 2$ let
\begin{align} \label{C1-space}
\mathcal{S}^p(G(F)//K)
\end{align}
denote the space of $K$-biinvariant functions $f:G(F) \to \CC$ such that
\begin{align*}
 \mathrm{sup}_{x \in G(F)} (|x|+1)^n \varphi_0(x)^{-2/p}|X*f*Y(x)|<\infty
\end{align*}
for all $n \geq 0$ and all invariant differential operators $X$, $Y$ on $G(F)$, that is, all elements of the universal enveloping algebra $U(\mathfrak{g}_\CC)$ of the complexification of $\mathfrak{g}$.
Here $\varphi_0$ is the spherical function of \eqref{spherical:func} and $|x|$ is the distance of $x$ to $K$ (see, e.g.~\cite[\S 1]{AnkerSimple}).

It is known that for $0<p<p'\leq 2$ there are continuous inclusions 
$$
\mathcal{S}^p(G(F)//K) \leq \mathcal{S}^{p'}(G(F)//K) \leq L^{p'}(G(F))
$$ 
(see \cite[\S 1]{AnkerSimple}).  To check the continuity of the last inclusion one uses the fact that  $\frac{\varphi_0^{2/p}(x)}{(|x|+1)^n} \in L^p(G(F))$ 
for $n$ sufficiently large \cite[Proposition 4.6.12]{GangolliVaradarajan}.  

\subsection{The Fourier transform}
For $0<p \leq 2$ let $\mathfrak{a}_p^*$ be the closed tube in $\mathfrak{a}_\CC^*$ of points whose real part lies in the closed convex
hull of $W\cdot(\tfrac{2}{p}-1)\rho$ in $\mathfrak{a}^*$.  Here $\rho$ denotes half the sum of the positive roots of $T$ in $B$. Let
$$
\mathcal{S}(\mathfrak{a}_{p}^*)
$$
denote the space of all complex valued functions $h:\mathfrak{a}_p^* \to \CC$ such that 
\begin{enumerate}
\item[(a)] all derivatives of the function $h$ exist and are continuous on $\mathfrak{a}_p^*$,
\item[(b)] the function $h$ is holomorphic in the interior of $\mathfrak{a}_p^*$, 
\item[(c)] for any polynomial $P$ in the symmetric algebra of $\mathfrak{a}^*$ and any integer $n \geq 0$
$$
\mathrm{sup}_{\lambda \in  \mathfrak{a}^*_p}(\parallel\lambda\parallel+1)^n\left|P\left(\frac{\partial}{\partial \lambda} \right)h(\lambda) \right|<\infty.
$$
\end{enumerate}

Trombi and Varadarajan \cite{TrombiVaradarajanSpherical} proved  that the spherical Fourier transform $f \mapsto \widetilde{f}(\lambda)$ extends to an isomorphism of Frechet algebras
\begin{align} \label{FT2}
\mathcal{S}^p(G(F)//K) \tilde{\lto} \mathcal{S}(\mathfrak{a}_p^*)^W.
\end{align}
The seminorms which make $\mathcal{S}^p(G(F)//K)$ and $\mathcal{S}(\mathfrak{a}_p^*)$ into Frechet spaces are the obvious ones.
The paper \cite{AnkerSimple}, which contains a simpler proof of the isomorphism \eqref{FT2}, is a very nice reference for the facts above, though, strictly speaking, it assumes that $G$ is semisimple.  In loc.~cit.~$\mathfrak{a}_p^*$ is denoted $i\mathfrak{a}_{2/p-1}^*$.  

For $0 <p \leq 1$, $f \in \mathcal{S}^p(G(F)//K)$ let
$$
f^{(B)}(t)=e^{\langle \rho, H_B(t)\rangle}\int_{N(F)} f(tn)dn
$$
be the constant term of $f$ along $B$.  It is absolutely convergent \cite[Theorem 6.2.4]{GangolliVaradarajan}.

Let $\chi:T(F) \to \CC^\times$ be a character.  If $\chi=e^{\langle \lambda , H_B \rangle}$ for some $\lambda \in \mathfrak{a}^*_\CC$ whose real part is  in the closed Weyl chamber defined by the positive roots attached to $B$
we will abbreviate 
$$
J(\lambda):=J(e^{\langle\lambda, H_B\rangle})\quad\text{and}\quad \mathrm{Ind}(\lambda):=\mathrm{Ind}_{B(F)}^{G(F)}(e^{\langle \lambda, H_B\rangle}).
$$
We recall the following special case of the Langlands classification:

\begin{thm} \label{thm:sph} Any irreducible admissible spherical representation of $G(F)$ is infinitisimally equivalent to $J(\lambda)$ for some $\lambda \in \mathfrak{a}_\CC^*$ whose real part is in the closed positive Weyl chamber attached to $B$.   
Conversely, for every $\lambda \in \mathfrak{a}_\CC^*$ whose real part is in the closed positive Weyl chamber with respect to $B$ 
the representation $\mathrm{Ind}(\lambda)$ has a unique irreducible quotient (usually called the Langlands quotient).  It is spherical.
\end{thm}

\begin{proof}
In the notation of \cite{Vogan:Rep:real} and \cite{BarbaschCiubotaruPantano} take $\delta=\mathrm{triv}$ and $\mu=\mathrm{triv}$.  In the terminology of loc.~cit.~$\delta$ is a fine $T(F) \cap K$-type, $\mu$ is a fine $K$-type and $\mu \in A(\delta)$.  The stabilizer $W_\delta$ of $\delta$ under the natural action of the Weyl group $W$ is all of $W$: $W_\delta=W$.  Thus the first assertion of the lemma follows from \cite[Theorem, \S 2.4]{BarbaschCiubotaruPantano}.  

On the other hand, since $\delta$ is trivial the $R$-group $R_\delta$ is trivial (see \cite[Definition 4.3.13]{Vogan:Rep:real}), hence the set $A(\delta)$ consists of one element, namely the trivial $K$-type (see \cite[Theorem 4.3.16]{Vogan:Rep:real}).  In view of this, the final assertion of the theorem is contained in \cite[ \S 2.11]{BarbaschCiubotaruPantano}.
\end{proof}

We also give a proof of the following well-known theorem for the convenience of the reader.

\begin{lem} \label{lem-FT} Let $\lambda \in \mathfrak{a}_\CC^*$ have real part in the closed positive Weyl chamber with respect to $B$.
Suppose that $J(\lambda)$ is unitary.  
If $f \in C_c^\infty(G(F)//K)$ then $J(\lambda)(f)$ acts via the scalar
\begin{align*}
\widetilde{f}(-\lambda)=\mathrm{tr}\,J(\lambda)(f)=\mathrm{tr}\,\mathrm{Ind}(\lambda)(f)=\mathrm{tr}\,e^{\langle\lambda, H_B\rangle}(f^{(B)})
\end{align*}
on the (unique) spherical vector in $J(\lambda)$. Under the same assumptions on $J(\lambda)$, if $0<p \leq 1$ and $f \in \mathcal{S}^p(G(F)//K)$ then $J(\lambda)(f)$ acts via the scalar
$$
\widetilde{f}(-\lambda)=\mathrm{tr}\,J(\lambda)(f)
$$
on the spherical vector in $J(\lambda)$.
\end{lem}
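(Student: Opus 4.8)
The plan is to reduce the statement to the three standard facts about spherical functions and unitarily normalized induction, treating the compactly supported case first and then bootstrapping to the Harish--Chandra Schwartz case by a continuity/density argument. First I would recall that $J(\lambda)$ is spherical precisely when $\mathrm{Ind}(\lambda)$ has a (necessarily unique) $K$-fixed vector, and that in that situation $J(\lambda)$ and $\mathrm{Ind}(\lambda)$ share the same spherical constituent, so that the scalar by which a $K$-biinvariant $f$ acts on the spherical vector is the same whether computed in $J(\lambda)$ or in $\mathrm{Ind}(\lambda)$. This gives $\mathrm{tr}\,J(\lambda)(f)=\mathrm{tr}\,\mathrm{Ind}(\lambda)(f)$ for $f \in C_c^\infty(G(F)//K)$, since for such $f$ the operator $\mathrm{Ind}(\lambda)(f)$ is a finite-rank (indeed rank $\leq 1$ once restricted to the relevant isotypic piece) operator whose trace is that single scalar.

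Next I would compute $\mathrm{tr}\,\mathrm{Ind}(\lambda)(f)$ directly. Realizing $\mathrm{Ind}(\lambda)$ in the compact picture on $L^2(K//M)$ and pairing the operator $\mathrm{Ind}(\lambda)(f)$ against the normalized $K$-fixed vector, the matrix coefficient one obtains is exactly $\varphi_{\lambda}$ up to the sign conventions fixed in \S\,3.2: concretely $\langle \mathrm{Ind}(\lambda)(f)\mathbf{1}_K, \mathbf{1}_K\rangle = \int_{G(F)} f(g)\,\varphi_{-\lambda}(g)\,dg = \widetilde{f}(-\lambda)$, where the sign is dictated by the definition $\widetilde{f}(\lambda)=\int f(g)\varphi_{-\lambda}(g)\,dg$ already adopted. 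Unfolding $\mathrm{Ind}(\lambda)$ along the Iwasawa decomposition using the chosen Haar measures (the factor $e^{\langle 2\rho,H_B(t)\rangle}$ in the measure decomposition is precisely what makes the unitary normalization work out) identifies this integral with $\mathrm{tr}\, e^{\langle\lambda,H_B\rangle}(f^{(B)})$, i.e.\ integration of the constant term $f^{(B)}$ against the character $e^{\langle\lambda,H_B\rangle}$ of $T(F)$; this is just the statement that the Jacquet module / constant term computes the trace on an induced representation. That disposes of the entire first sentence of the lemma.

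For the second assertion, fix $0<p\leq 1$, assume $J(\lambda)$ is unitary, and take $f \in \mathcal{S}^p(G(F)//K)$. Since $\mathcal{S}^p(G(F)//K)\subseteq L^1(G(F))$ (as recalled in \S\,3.3) and $J(\lambda)$ is unitary, the operator $J(\lambda)(f)$ is bounded and still acts by a scalar on the one-dimensional space of $K$-fixed vectors; call it $c(f)$. Both $f\mapsto c(f)$ and $f\mapsto \widetilde{f}(-\lambda)$ are continuous linear functionals on $\mathcal{S}^p(G(F)//K)$ --- the former because $f\mapsto J(\lambda)(f)$ is continuous into bounded operators on an $L^1$-type space, the latter because $\widetilde{f}$ depends continuously on $f$ by the Trombi--Varadarajan isomorphism \eqref{FT2} together with the fact that $-\lambda$ lies in the relevant tube $\mathfrak{a}_p^*$ (this is where unitarity of $J(\lambda)$ enters: it forces $\mathrm{Re}(\lambda)$ into the closed convex hull of $W\cdot(2/p-1)\rho$, so evaluation at $-\lambda$ is a continuous seminorm on $\mathcal{S}(\mathfrak{a}_p^*)^W$). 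Since $C_c^\infty(G(F)//K)$ is dense in $\mathcal{S}^p(G(F)//K)$ and the two functionals agree there by the first part, they agree on all of $\mathcal{S}^p(G(F)//K)$, giving $\widetilde{f}(-\lambda)=\mathrm{tr}\,J(\lambda)(f)$.

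The main obstacle is the bookkeeping in the second step --- getting all the normalizations (the sign in $\widetilde{f}$, the $\rho$-shifts in the compact versus noncompact pictures, and the $e^{\langle 2\rho,H_B(t)\rangle}$ Jacobian) to line up so that the four displayed quantities are literally equal rather than equal up to an untracked constant. A secondary subtlety is justifying that $-\lambda \in \mathfrak{a}_p^*$ for unitary $J(\lambda)$; this is a known consequence of the classification of the unitary dual / the fact that unitary spherical representations have exponents in the appropriate tube, and I would simply cite it, but it is the one input that is not purely formal.
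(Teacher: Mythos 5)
Your proposal is correct and follows essentially the same route as the paper's proof: descent formula for $\mathrm{tr}\,\mathrm{Ind}(\lambda)(f)=\mathrm{tr}\,e^{\langle\lambda,H_B\rangle}(f^{(B)})$, the matrix-coefficient identification of $\varphi_\lambda$ as the spherical vector in $\mathrm{Ind}(\lambda)$ to get $\widetilde{f}(-\lambda)$, the shared spherical line of $\mathrm{Ind}(\lambda)$ and $J(\lambda)$ to transfer the scalar, and then density of $C_c^\infty$ in $\mathcal{S}^p$ combined with unitarity of $J(\lambda)$ and the Trombi--Varadarajan isomorphism for the extension. The only cosmetic difference is that you phrase the last step as agreement of two continuous linear functionals on a dense subspace, where the paper takes a sequence $f_n\to f$ and passes to the limit by $L^1$-operator-norm control; these are the same argument.
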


\begin{proof}
Assume first that $f \in C_c^\infty(G(F)//K)$.
The identity $\mathrm{tr}\,\mathrm{Ind}(\lambda)(f)=\mathrm{tr}\,e^{\langle \lambda ,H_B\rangle}(f^{(B)})$ is the descent formula (see, e.g.~\cite[(10.23)]{KnappSS}).  The vector $\varphi_{\lambda}$ is spherical, satisfies $\varphi_\lambda(1)=1$, and is a matrix coefficient of the representation $\mathrm{Ind}(\lambda)$ (compare \cite[\S 1]{AnkerSimple}).  It is also known that this representation, even if it is reducible, contains a unique spherical line \cite[3.1.13]{GangolliVaradarajan}.  It follows that $\mathrm{Ind}(\lambda)(f)$ acts via the scalar $\widetilde{f}(-\lambda)=\mathrm{tr}\,\mathrm{Ind}(\lambda)(f)$ on this spherical line.  On the other hand, one has a nonzero equivariant map
\begin{align} \label{equiv}
\mathrm{Ind}(\lambda) \lto J(\lambda ).
\end{align}
Since the irreducible representation $J(\lambda)$, being spherical, has a unique spherical line this line must be the image of the unique spherical line in $\mathrm{Ind}(\lambda)$ under \eqref{equiv}.  Thus $\mathrm{tr}\,J(\lambda)(f)=\mathrm{tr}\,\mathrm{Ind}(\lambda)(f)$.

In the following discussion we use basic facts that are recalled in \cite[\S 1]{AnkerSimple} without further comment.  Let $0<p \leq 1$ and $f \in \mathcal{S}^p(G(F)//K)$.  
Then $\widetilde{f}$ is defined on $\lambda \in \mathfrak{a}_1^*$.  Since $C_c^\infty(G(F)//K)$ is dense in $\mathcal{S}^p(G(F)//K)$ we can choose a Cauchy sequence $\{f_n\}_{n=1}^\infty \subset C_c^\infty(G(F)//K)$ converging to $f$ in $\mathcal{S}^p(G(F)//K)$.  In particular,
$\lim_{n \to \infty} \widetilde{f}_n=\widetilde{f}$ pointwise on $\mathfrak{a}_1^*$.  

Now the inclusion $\mathcal{S}^p(G(F)//K) \to L^1(G(F)//K)$ is continuous. Since $J(\lambda)$ is unitary the fact that $\lim_{n \to \infty}f_n = f$ in $L^1(G(F))$ implies that 
$$
\lim_{n\to \infty}\mathrm{tr}\,J(\lambda)(f_n)=\mathrm{tr}\,J(\lambda)(f).
$$
  Thus 
$$
\widetilde{f}(-\lambda)=\lim_{n \to \infty} \widetilde{f}_n(-\lambda)=\lim_{n \to \infty} \mathrm{tr}\,J(\lambda)(f_n)= \mathrm{tr}\,J(\lambda)(f).
$$
\end{proof}

\section{Proof of Theorem \ref{thm-lfe}} \label{sec-main-thm}

Recall that we have normalized $r:\widehat{G} \to \GL_n$ so that it induces a morphism $r:\widehat{T} \to \widehat{T}_n$ and used it to construct a dual morphism
$$
r^{\vee}:T_n \lto T.
$$
There are natural isomorphisms 
$$
T_n(F)/M_n \cong X_*(T_n) \otimes_{\ZZ}\RR=:\mathfrak{a}_n,
$$
 and $T(F)/M \cong \mathfrak{a}$, so $r^\vee$ induces an $\RR$-linear map
$$
r^{\vee}:\mathfrak{a}_n:=T_n(F)/M_n \lto T(F)/M=:\mathfrak{a}.
$$
It is surjective, as the complexification of its dual is
$$
r:\mathfrak{a}_\CC^* \lto \mathfrak{a}_{n\CC}^*
$$
which is injective since we assumed $r$ has zero dimensional kernel.  Recall the group $W'$ of \S \ref{sec-transfer}.  It acts naturally on 
$\mathfrak{a}$, $\mathfrak{a}_n$, $\mathfrak{a}_{\CC}^*$, $\mathfrak{a}_{n\CC}^*$ and the maps $r$ and $r^{\vee}$ are $W'$ equivariant (see \S \ref{sec-transfer}).  We therefore obtain a push-forward map
\begin{align} \label{rpush}
r_*^{\vee}:C_c^\infty(T_n(F)/M_n)^{W'} \lto C_c^\infty(T(F)/M)^{W'}=C_c^\infty(T(F)/M)^W.
\end{align}
We define
\begin{align} \label{r:space}
C_c^\infty(G(F)//K,r):=\left\{f \in C_c^\infty(G(F)//K): f^{(B)} \in r_*^{\vee}(C_c^\infty(T_n(F)/M_n)^{W'})\right\}.
\end{align}

Let $d_{\omega} \in \mathfrak{a}^*$ be the point corresponding to $\omega^\vee$.  
In this section we prove the following theorem.  It obviously implies Theorem \ref{thm-lfe}; it is simply a version of Theorem \ref{thm-lfe} that makes explicit how large $\mathrm{Re}(s)$ must be in terms of the representation $r$.
\begin{thm}\label{thm-lfe2} 
Let $f \in C_c^\infty(G(F)//K,r)$ and  $0<p \leq 1$.  There is 
a function
$$
\mathcal{F}_{r,\psi}(f) \in C^\infty(G(F)//K)
$$
such that 
\begin{enumerate}
\item[(a)] one has
$
\mathcal{F}_{r,\psi}(f)\omega_s \in \mathcal{S}^p(G(F)//K)
$
provided that 
$
\mathrm{Re}(\mathfrak{a}_p^*)
+\mathrm{Re}(s)d_\omega \subset \mathfrak{a}_+^*
$  
and

\item[(b)] if $\pi$ is unitary and irreducible then 
\begin{align} \label{loc-fe-1}
\gamma(s,\pi,r,\psi)\mathrm{tr}\,\pi_{s}(f)=\mathrm{tr}\,\pi^\vee_{1-s}(\mathcal{F}_{r,\psi}(f))
\end{align}
in the sense of analytic continuation.
\end{enumerate}
\end{thm}

Now we give a diagram that outlines the construction of 
$\mathcal{F}_{r,\psi}(f)$ from $f$.  Let $s_0>0$ be large enough that 
$\mathrm{Re}(\mathfrak{a}_p^*)+s_0d_{\omega} \subset \mathfrak{a}_+^*$.  
Let
 $\mathcal{S}(\mathfrak{a}_p^*)(-s_0)$ (resp.~$\mathcal{S}(\mathfrak{a}_p^*)(-s_0)^W$) denote the space of functions of the form 
 \begin{align*}
 \mathfrak{a}_p^*+s_0d_\omega & \lto \CC\\
\lambda &\longmapsto  \widetilde{f}(\lambda-s_0d_\omega)
\end{align*} 
for some $\widetilde{f} \in \mathcal{S}(\mathfrak{a}_p^*)$ (resp.~$\mathcal{S}(\mathfrak{a}_p^*)^W$).  We note that $\omega$ is fixed by $W$, so this space admits an action of $W$, and $ \mathcal{S}(\mathfrak{a}_p^*)^W(-s_0)= \mathcal{S}(\mathfrak{a}_p^*)(-s_0)^W$.  Moreover, \eqref{FT2} induces an isomorphism
\begin{align*}
\mathcal{S}^p(G(F)//K)\omega_{-s_0} &\tilde{\lto} \mathcal{S}(\mathfrak{a}_p^*)(-s_0)^W\\
f &\tilde{\longmapsto} \widetilde{f}(\lambda)
\end{align*}
where the $\CC$-vector space on the left is the space 
of functions of the form $f\omega_{-s_0}$ for $f \in \mathcal{S}^p(G(F)//K)$.
We will construct $\mathcal{F}_{r,\psi}(f)$ so that the following diagram commutes:
\begin{center}
\begin{tikzcd} 
C_c^\infty(T_n(F)/M_n)^{W'} \arrow[r,"\widehat{}"] \arrow[d,"r^{\vee}_*"] &  \mathcal{S}(\mathfrak{t}_n(F)/M_n)^{W'} \arrow[d, "\Psi \mapsto \widetilde{\Psi} \circ r"] 
 \\ 
C_c^{\infty}(T(F)/M)^W
& \mathcal{S}(\mathfrak{a}_p^*)(-s_0)^W\\
C_c^\infty(G(F)//K) \arrow[u, "f \mapsto f^{(B)}"] \arrow[r, "\mathcal{F}_{r,\psi}"]
&\mathcal{S}^p(G(F)//K)\omega_{-s_0} \arrow[u,"f \mapsto \widetilde{f}(-\lambda)"]
\end{tikzcd}
\end{center}

\noindent Here the horizontal arrow marked \,$\widehat{\textrm{ }}$\, is \eqref{FT3}, which is just the Fourier transform on $T_n(F) \subset \mathfrak{t}_n(F)$ (the $F$-points of the Lie algebra of $T_n$).  Moreover $\widetilde{\Psi}$ is a Mellin transform.
The proof of Theorem \ref{thm-lfe2} we now give amounts to filling in the details of this diagram. The functional equation ultimately reduces to the familiar functional equation of Tate zeta functions.

Before beginning the proof in earnest let us describe the map $\Psi \mapsto \widetilde{\Psi} \circ r$ more precisely and show that it is well defined.  
Let $\mathfrak{t}_n:=\mathrm{Lie}(T_n)$ and denote by $\mathcal{S}(\mathfrak{t}_n(F))$ the usual Schwartz space.  Assume that 
 $$
 \Psi \in \mathcal{S}(\mathfrak{t}_n(F)).
 $$  
 We then have that
\begin{align} \label{Mellin}
\widetilde{\Psi}(\lambda):=\int_{T_n(F)}e^{\langle\lambda ,H_{T_n}(t)\rangle}\Psi(t)dt
\end{align}
is absolutely convergent and holomorphic if 
$\mathrm{Re}(\lambda) \in (\mathfrak{a}_{n}^*)_+$.  Here, as above, $dt$ is the Haar measure on $T_n(F)$.  In fact something stronger is true:
\begin{lem} \label{lem-bounds} For any polynomial $P$ in the symmetric algebra of $\mathfrak{a}_n^*$  and any integer $n \geq 0$ the quantity
$$
(\parallel\lambda \parallel+1)^n\left|P\left(\frac{\partial}{\partial \lambda} \right)\widetilde{\Psi}(\lambda) \right| 
$$
is bounded for $\mathrm{Re}(\lambda)$ in a fixed compact subset of 
$(\mathfrak{a}_{n}^*)_+$.  Here $||\cdot||$ is any Hermitian inner product on $\mathfrak{a}_{n\CC}^*$.
\end{lem}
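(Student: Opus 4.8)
The plan is to reduce the lemma, via the decomposition $\widetilde{T}(F)\cong\widetilde{M}\times\widetilde{\mathfrak a}$, to a statement about the ordinary Euclidean Fourier transform of a single function on $\widetilde{\mathfrak a}$ which is compactly supported in the $\mathfrak a_0$-directions and rapidly decreasing in the $\mathfrak a_n$-directions, with all bounds uniform as $\mathrm{Re}(\lambda)$ ranges over a compact subset of $\mathfrak a_0^*\times(\mathfrak a_n^*)_+$. Concretely, I would fix a splitting $\widetilde{\mathfrak a}\to\widetilde{T}(F)$, $H\mapsto a_H$, of the Harish-Chandra map $H_{\widetilde T}$, write each $t\in\widetilde{T}(F)$ uniquely as $m\,a_H$ with $m\in\widetilde{M}$ and $H\in\widetilde{\mathfrak a}$ (so that $dt$ is a multiple of $dm\,dH$), integrate out the compact group $\widetilde{M}$, and set $g(H):=\int_{\widetilde{M}}\Psi(m\,a_H)\,dm$. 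Then \eqref{Mellin} becomes, with $\mu:=\mathrm{Re}(\lambda)$ and $i\nu$ its imaginary part,
$$
\widetilde{\Psi}(\mu+i\nu)=\int_{\widetilde{\mathfrak a}}g(H)\,e^{\langle\mu,H\rangle}\,e^{i\langle\nu,H\rangle}\,dH=\widehat{g_\mu}(\nu),\qquad g_\mu(H):=g(H)\,e^{\langle\mu,H\rangle},
$$
the Euclidean Fourier transform of $g_\mu$ on $\widetilde{\mathfrak a}$. (For $F=\CC$ the group $\widetilde{M}$ is larger, but $\widetilde{\mathfrak a}$ and this formula are unchanged, so I would treat both archimedean cases together.)

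The core of the argument is then the estimate that, for every compact $C\subseteq\mathfrak a_0^*\times(\mathfrak a_n^*)_+$ and all multi-indices $\beta,\gamma$,
$$
\sup_{\mu\in C}\ \bigl\|\,H^{\beta}\,\partial_H^{\gamma}g_\mu\,\bigr\|_{L^1(\widetilde{\mathfrak a})}<\infty .
$$
This rests on three observations. In the $\mathfrak a_0$-directions $\Psi$ is compactly supported (it lies in $C_c^\infty$ of the $T_0(F)$-variable), so $g$ is supported in a fixed ball there, on which $e^{\langle\mu,H\rangle}$ and its $H$-derivatives are bounded uniformly for $\mu\in C$. In the $\mathfrak a_n$-directions, writing the coordinates of $H$ as $u_i=\log|x_i|$ with $x\in T_n(F)\subset\mathfrak t_n(F)=F^n$, the function $\Psi$ is Schwartz in the $x$-variable, so as $u_i\to+\infty$ it and all its derivatives decay faster than any polynomial, which absorbs both the polynomial weights $H^{\beta}$ and the factor $e^{\langle\mu,H\rangle}=\prod_i|x_i|^{s_i}$, where $(s_1,\dots,s_n)$ is the image of the $\mathfrak a_n$-component of $\mu$ in $\RR_{>0}^n$ (bounded above on $C$); while as $u_i\to-\infty$ the factor $|x_i|^{s_i}=e^{s_iu_i}$ decays exponentially and dominates the merely bounded Schwartz factor near $x_i=0$. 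The uniformity in $\mu$ is exactly why $C$ must be compact \emph{inside the open cone}: then the $s_i$ are bounded below by some $\varepsilon>0$, yielding a uniform bound $e^{-\varepsilon|u_i|}$ near the wall. Differentiation respects all of this, since $\partial_H^{\gamma}$ either lands on the Schwartz factor (still Schwartz) or pulls out a factor polynomial in $\mu$, bounded on $C$. I expect this $L^1$-bound to be essentially the only real work: in particular making the integrability near $x_i=0$ uniform in $\mu$, and passing from the algebraic tensor product (where the claim is transparent, being built from Tate local zeta integrals in the $T_n$-factors and elementary Paley--Wiener integrals in the $T_0$-factors) to its completion, which is legitimate because the estimate is a bound on a continuous seminorm.

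Granting this $L^1$-bound, the conclusion is routine. Differentiation under the integral sign (easily justified by the displayed bound) shows that $\widetilde{\Psi}$ is holomorphic in the interior of $\mathfrak a_0^*\times(\mathfrak a_n^*)_+$; since a holomorphic function satisfies $\partial/\partial\lambda_j=\tfrac{1}{i}\,\partial/\partial\nu_j$, one gets $P(\partial/\partial\lambda)\widetilde{\Psi}(\mu+i\nu)=\widehat{P(H)g_\mu}(\nu)$, and then, for each multi-index $\alpha$, the function $\nu^{\alpha}\widehat{P(H)g_\mu}(\nu)$ equals $\widehat{\partial_H^{\alpha}\bigl(P(H)g_\mu\bigr)}(\nu)$ up to a unit constant, hence is bounded in absolute value by $\bigl\|\partial_H^{\alpha}\bigl(P(H)g_\mu\bigr)\bigr\|_{L^1}$, which by the displayed bound and the Leibniz rule is finite uniformly for $\mu\in C$. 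Summing over $|\alpha|\le n$ shows that $(1+\|\nu\|)^{n}\bigl|P(\partial/\partial\lambda)\widetilde{\Psi}(\mu+i\nu)\bigr|$ is bounded uniformly in $\mu\in C$ and $\nu$; since $\|\lambda\|\le\max_{\mu\in C}\|\mu\|+\|\nu\|$, this is equivalent to the asserted bound on $(\|\lambda\|+1)^{n}\bigl|P(\partial/\partial\lambda)\widetilde{\Psi}(\lambda)\bigr|$ for $\mathrm{Re}(\lambda)$ in $C$. (Since $\widetilde{\mathfrak a}^*$ is finite-dimensional, it is immaterial which norm is used.)
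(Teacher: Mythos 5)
Your proof is correct and follows essentially the same route as the paper's: pass to logarithmic coordinates so that the Mellin-type integral becomes a Euclidean Fourier transform of $g_\mu$, and bound polynomial weights and $\lambda$-derivatives by $L^1$ norms of weighted $H$-derivatives of $g_\mu$, which are finite uniformly because the $T_0$-factor is compactly supported, the $T_n$-factor is Schwartz (absorbing $e^{\langle\mu,H\rangle}$ as $H\to+\infty$), and compactness of $C$ in the open cone keeps the exponents bounded away from zero (giving decay as $H\to-\infty$). The paper compresses this to a one-variable Mellin estimate via $D=x\,\tfrac{d}{dx}$ acting on $(\log x)^k f$, whereas you carry out the multi-variable $L^1$ bound directly and explicitly address the passage from the algebraic tensor product to its completion and the uniformity in $\mu\in C$ — points the paper's proof leaves implicit.
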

\begin{proof}
The lemma is a consequence of the following claim: for any $f \in \mathcal{S}(\RR)$, real numbers $0<a<b$, and nonnegative integers $n,k$  one has that
$$
|s|^n\left|\frac{d^k}{d s^k}\widetilde{f}(s) \right| \ll_{f,a,b,n,k} 1
$$
provided that $a \leq \mathrm{Re}(s) \leq b$.  Here $\widetilde{f}(s)=\int_{0}^\infty f(x)x^{s-1}dx$ is the usual Mellin transform.  To check this, let $D:=x\frac{d}{dx}$.  It is not hard to see that $x^{\varepsilon} D^n\left(\log^k f \right)$ is continuous on $\RR_{>0}$ and in $L^1\left(\RR_{>0},\frac{dx}{x}\right)$ for any real number $\varepsilon>0$, and thus the Mellin transform $\left(D^n\left(\log^k f \right)\right)^{\sim}(\sigma+it)$ is bounded as a function of $t \in \RR$ for $a \leq \sigma \leq b$.  Thus
$$
\left|s^n\frac{d^k}{d s^k}\widetilde{f}(s)\right|= \left|\left(D^n\left(\log^kf\right) \right)^{\sim}(s)\right|
$$
is bounded for $a \leq \mathrm{Re}(s) \leq b $.  
\end{proof}

\begin{cor}  \label{cor:Mellin}
Suppose that $s_0 \in \RR_{>0}$ is large enough that 
$
\mathfrak{a}_p^*+s_0d_\omega \subset \mathfrak{a}^*_+$ and that $\Psi \in \mathcal{S}(\mathfrak{t}_n(F)/M_n)$.  Then the function 
$$
\lambda \longmapsto \widetilde{\Psi}(r(\lambda))
$$ 
is in $\mathcal{S}(\mathfrak{a}_p^*)(-s_0)$.
\end{cor}

\begin{proof}
By assumption and the definition \eqref{aplus} of $\mathfrak{a}^+$ one has $r(\mathfrak{a}_p^*+s_0d_{\omega}) \subset (\mathfrak{a}_n^*)_+$.  Thus the corollary follows from Lemma \ref{lem-bounds} and the injectivity of the map $r:\mathfrak{a}^*_\CC \to \mathfrak{a}^*_{n\CC}$.
\end{proof}

\noindent The corollary implies that the map $\Psi \mapsto \widetilde{\Psi} \circ r$ in the top right of the diagram is well-defined for $s_0$ large enough.

We now begin the proof of Theorem \ref{thm-lfe2}.   Let $\pi$ be a given spherical unitary irreducible representation of $G(F)$.  Thus there is a quasi-character
$\chi:T(F)/M \to \CC^\times$ so that $\pi \cong J(\chi)$.  Let $f \in C_c^\infty(G(F)//K,r)$.  We will trace $J(\chi)$ and $f$ along the upper path from $C_c^{\infty}(G(F)//K)$ to $\mathcal{S}^p(G(F)//K)\omega_{-s_0}$ in the diagram.
 Lemma \ref{lem-FT} implies that
\begin{align} \label{des-rel}
\mathrm{tr}\,J(\chi)(f)=\chi(f^{(B)}).
\end{align}

We choose a $\Phi \in C_c^\infty(T_n(F)/M_n)^{W'}$ so that the push-forward
$
r^{\vee}_*(\Phi)$ is $f^{(B)}$.  
 Thus
\begin{align} \label{first-eq}
\mathrm{tr}\,J(\chi)_s(f)=\chi \omega_s(f^{(B)})=\chi\omega_s(r^{\vee}_*(\Phi))=r(\chi)|\det|^s(\Phi).
\end{align}

We are now at the top row of the diagram.
The usual embedding $\GL_n \hookrightarrow \gl_n$ of algebraic monoids induces an embedding $T_n \hookrightarrow \mathfrak{t}_n$ where $\mathfrak{t}_n:=\mathrm{Lie}(T_n)$.  
We can therefore regard an element of $C_c^\infty(T_n(F))$ as an element of $C_c^\infty(\mathfrak{t}_n(F))$.  The pairing
\begin{align*}
\mathfrak{t}_n(F) \times \mathfrak{t}_n(F) &\lto \CC\\
(X,Y)& \longmapsto \psi(\mathrm{tr}(XY))
\end{align*}
is perfect.  For $t \in T_n(F)$ let
\begin{align} \label{FT3}
\widehat{\Phi}(t)=\int_{\mathfrak{t}_n(F)}\Phi(x)\psi(\mathrm{tr}(tx)) dx;
\end{align}
it is just the Fourier transform.  We then have
\begin{align} \label{fotr}
\gamma(s,J(r(\chi)),\psi)r(\chi)|\det|^s(\Phi)=r(\chi^{-1})|\det|^{1-s}(\widehat{\Phi})
\end{align}
by the local functional equation of Tate's thesis \cite[(3.2.1)]{Tate_NT}.  Here the left hand side is meromorphic as a function of $s$, and $\mathrm{tr}\,r(\chi^{-1})|\det|^{1-s}(\widehat{\Phi})$ is absolutely convergent for $\mathrm{Re}(s)$ sufficiently small in a sense depending on $\chi$.

We are now at the upper right corner of the diagram.  
The representation $J(\chi)$ is unitary, but $\chi$ need not be.  
However, there is a $\mu \in \mathfrak{a}_\CC^*$ whose real part lies in the the closed convex hull of $W.\rho$ in $\mathfrak{a}^*$ such that $\chi(t)=e^{\langle \mu ,H_T(t)\rangle}$ by \cite[p. 654]{KnappSS}.  In particular, since $0<p \leq 1$ one has $\mu \in \mathfrak{a}_p^*$.  Then 
\begin{align} \label{mu:def}
r(\chi^{-1})=e^{\langle r(-\mu) , H_T\rangle}.
\end{align}
Combining this notation with \eqref{first-eq} and \eqref{fotr} we arrive at
\begin{align} \label{ft-2}
\gamma(s,J(r(\chi)),\psi)\mathrm{tr}\,J(\chi)_s(f)&=\gamma(s,J(r(\chi)),\psi)r(\chi)|\det|^s(\Phi)\\
&=r(\chi^{-1})|\det|^{1-s}(\widehat{\Phi}) \nonumber\\
&=\widehat{\Phi}^{\sim}(r(-\mu+(1-s)d_\omega )). \nonumber
\end{align}
Now $\lambda \mapsto \widehat{\Phi}^{\sim}(r(\lambda))$ is in $\mathcal{S}(\mathfrak{a}_p^*)(-s_0)$ for $s_0$ sufficiently large by Corollary \ref{cor:Mellin}.  Thus there is a unique $h \in \mathcal{S}^p(G(F)//K)$ so that 
 for all $\lambda \in \mathfrak{a}_p^*$ one has
 \begin{align*}
 \widehat{h}(-\lambda)=\widehat{\Phi}^{\sim}(r(\lambda+s_0d_\omega))
 \end{align*}
 by \eqref{FT2}.
 In particular, if $\lambda$ is chosen so that $J(\lambda)$ is spherical and unitary then 
\begin{align} \label{2-eq}
\mathrm{tr}J(\lambda)(h)=
\widehat{\Phi}^{\sim}(r(\lambda+s_0d_\omega))
\end{align}
by  Lemma \ref{lem-FT}.  
Set
$$
\mathcal{F}_{r,\psi}(f):=h\omega_{-s_0}.
$$
By construction, $h \in \mathcal{S}^p(G(F)//K)$.  We have now successfully traversed along the upper path from $C_c^\infty(G(F)//K)$ to $\mathcal{S}^p(G(F)//K)\omega_{-s_0}$ in our diagram.

Combining \eqref{ft-2} and \eqref{2-eq} we deduce that (with $\mu$ as in \eqref{mu:def}),
\begin{align} \label{loc-fe}
\gamma(s,J(\chi),r,\psi)\mathrm{tr}\,J(\chi)_{s}(f)
&=\widehat{\Phi}^{\sim}(r(-\mu+(1-s)d_\omega))\\&=\mathcal{F}_{r,\psi}(f)^\sim(-\mu+(1-s)d_\omega) \nonumber \\&
=\mathrm{tr}\,J(\chi)^\vee_{1-s}(\mathcal{F}_{r,\psi}(f)). \nonumber
\end{align}
This completes the proof of the theorem.  
\qed

\section{A global application} \label{sec:app}

Let $F$ be a number field and let $\infty$ be the set of infinite places of $F$.  Let $G=\GL_m$ for some integer $m$.  It is not strictly necessary to take $G=\GL_m$ for what follows, but it makes the discussion simpler.  We also restrict ourselves to everywhere unramified representations.  We assume $\omega=\det:G \to \GG_m$.

\begin{rem} We have already discussed how one might remove the unramified assumption at the archimedean places in the introduction.
To treat representations that are ramified at finite places one would have to define nonarchimedean Fourier transforms and give some analytic control on them similar to the control afforded in the archimedean setting by Theorem \ref{thm-lfe}.  More specifically, one would need to show that a twist of them by $\omega_{s}$ is $L^1$ for $\mathrm{Re}(s)$ sufficiently large.  For $G=\GL_m$ one can define these nonarchimedean Fourier transforms spectrally using the Plancherel formula since the local Langlands correspondence is known.   This is the approach of \cite{LafforgueJJM}; in loc.~cit.~the analytic control is not established.  Cheng and Ng\^o's approach in \cite{Cheng:Ngo:BK}, if it is generalized from the finite field case to the local field case, may also yield the desired Fourier transforms.  
\end{rem}

 We retain the obvious analogues of the notation above in this global setting; for example $T \leq \GL_m$ is a maximal split torus which we take to be the diagonal matrices for simplicity.
For $v \nmid \infty$ let
\begin{align} \label{Sat}
\mathcal{S}:C_c^\infty(G(F_v)//G(\OO_{F_v})) \lto \CC[\widehat{T}]^{W}
\end{align}
be the Satake isomorphism.
Let 
$$
r:\widehat{G} \lto \GL_m
$$ 
be an irreducible representation.  Let $\A_F^\infty$ denote the ring of finite adeles of $F$ and let
$$
\LL_r:=\prod_{v \nmid \infty}\LL_{r,v} \in C_{ac}^\infty(G(\A^\infty_F)//G(\widehat{\OO}_F))
$$
where
$$
 \LL_{r,v}:=\sum_{k=0}^\infty \mathcal{S}^{-1}\left(\mathrm{tr}\,\mathrm{Sym}^k\left( r(t) \right)\right) \in C^\infty_{ac}(G(F_v)//G(\OO_{F_v}))
$$
with $t \in \widehat{T}(\CC)$.  Here the subscript $ac$ denotes the space of functions that are almost compactly supported, in other words, when restricted to a subset of $G(\A_F^\infty)$  with determinant lying in a compact subset of $(\A_F^\infty)^\times$ they are compactly supported (and similarly in the local setting).  Then if $\pi^\infty$ is an irreducible  unramified admissible representation of $G(\A_F^\infty)$ one has
\begin{align} \label{unram:Lfunc}
\mathrm{tr}\,\pi_{s}^\infty(\LL_{r})=L(s,\pi^\infty,r)
\end{align}
for $\mathrm{Re}(s)$ large enough.  Here $\pi_s^\infty:=\pi^\infty(|\omega|^{\infty})^{s/N}$.

Let $A$ be the connected component of the real points of the maximal $\QQ$-split torus in the center of $\mathrm{Res}_{F/\QQ}G$ and 
$$
G(\A_F)^1:=\mathrm{ker}\left(|\cdot|_{\A_F} \circ \det:G(\A_F) \lto \RR_{>0} \right).
$$

In \cite{LanglandsBE} Langlands proposed a method to prove Langlands functoriality in general via the trace formula.  His point of departure was that for $\bar{f} \in C_c^\infty(A \backslash G(\A_{F\infty}))$ one can use the trace formula to provide an absolutely convergent expression for
\begin{align}
\sum_{\pi} \mathrm{tr}\,\pi_\infty(\bar{f})\pi^\infty_s(\LL_{r})
\end{align}
for $\mathrm{Re}(s)$ large.  Here the sum over $\pi$ is over cuspidal automorphic representations of $A \backslash G(\A_F)$.  We note that the sum here is infinite, but since $\mathrm{tr}\,\pi^{\infty}_s(\LL_r)$ is bounded independently of $s$ and $\pi$ for $\mathrm{Re}(s)$ sufficiently large (compare the proof of Corollary \ref{cor:L1} below) and the regular action of any element of $C_c^\infty(A \backslash G(\A_F))$ on the
cuspidal spectrum is trace class the sum is absolutely convergent.
Strictly speaking, Langlands used logarithmic derivatives of $L$-functions. Sarnak proposed the current formulation because it appears to be  more tractable analytically \cite{SarnakLetter}.  

One can then hope to use the trace formula to give an expression for
\begin{align} \label{res:sum}
\sum_\pi \mathrm{Res}_{s=1} \mathrm{tr}\, \pi_\infty(\bar{f})L(s,\pi^\infty,r)
\end{align}
in terms of orbital integrals (and automorphic representations on Levi subgroups).
The residues ought to be nonzero for representations whose $L$-parameter, upon composition with $r$, fixes a vector in $V$.  These ought to be transfers from smaller groups, and one hopes to compare the sum of residues
\eqref{res:sum} with corresponding sums on smaller groups.  Since every algebraic subgroup of $\widehat{G}$ is the fixed points of a line in some representation of $\widehat{G}$ by a theorem of Chevalley, in principle executing this approach would lead to a proof of functoriality in general.

However, Langlands gave no absolutely convergent geometric expression for \eqref{res:sum} nor any indication of how to obtain one, even assuming Langlands functoriality.  In practice this seems to be an extremely difficult analytic hurdle that has only been overcome in a handful of cases \cite{AltugBEI,Getz4var,HermanRS,GetzHerman,VenkateshThesis,WhiteBE} that are essentially those isolated as tractable by Sarnak in his letter \cite{SarnakLetter}.  

In this section we use Theorem \ref{thm-lfe} and work of Finis, Lapid, and M\"uller to give an absolutely convergent expression in terms of orbital integrals and automorphic representations of Levi subgroups that is 
equal to \eqref{res:sum} if one assumes Langlands functoriality (what we need is given precisely in Conjecture \ref{conj:Langl} below).  We emphasize that the expression makes sense without any assumption in place, so one could try to use it to study Langlands' beyond endoscopy proposal.  At very least it allows us to replace \eqref{res:sum} with a quantity which is well-defined without any assumptions.

\begin{rem} For some time Ng\^o has advocated combining Braverman and Kazhdan's proposal in \cite{BK-lifting} with the trace formula to prove functional equations of $L$-functions.  The author first learned of this idea from Ng\^o at IAS in 2010, and Ng\^o has given a progress report on his perspective at the 2016 Takagi lectures. The construction we now propose, which is due to the author, amounts to understanding the residues that occur when one follows Ng\^o's suggestion.  In particular one can give an absolutely convergent expression for the sum of residues that is the focus of Langlands' beyond endoscopy proposal.
\end{rem}

For a compact open subgroup $K \leq G(\A_F^\infty)$ let
$$
\mathcal{C}(G(\A_F),K)=\{f:G(\A_F)/K \to \CC: |f*X|_{L^1(G(\A_F))} <\infty \textrm{ for all }X \in U(\mathfrak{g}_\CC)\}.
$$
Here $\mathfrak{g}$ is the Lie algebra of 
$$
G(\A_{F\infty})
$$ 
(viewed as a real Lie algebra) and $U(\mathfrak{g}_\CC)$ is its universal enveloping algebra.  This is the space of test functions treated in \cite{FinisLapidMullerAbsConv} and \cite{FinisLapid2011}.  A slightly different space of test functions (called $\mathcal{C}(G(\A_F)^1,K)$) is considered in \cite{FinisLapidContinuityGeometric}.  
  In any case, the main result of these papers is that Arthur's noninvariant trace formula is valid for functions in $\mathcal{C}(G(\A_F),K)$.

Let $K_\infty \leq G(\A_{F\infty})$ be a maximal compact subgroup and let 
$$
f \in \otimes_{v|\infty}C_c^\infty(G(F_v)//K_v,r)
$$
(see \eqref{r:space}).
  Let $\mathcal{F}_{r,\psi}(f):G(\A_{F\infty}) \to \CC$ be its non-abelian Fourier transform.  For $g \in G(\A_F)$ and $s \in \CC$ we set 
\begin{align*}
f\LL_r\omega_s(g):&=f\omega_{s \infty}(g_\infty)\LL_r\omega_{s}(g^\infty),\\
\mathcal{F}_{r,\psi}(f)\LL_r\omega_s(g):&=\mathcal{F}_{r,\psi}(f)\omega_{s\infty}(g_\infty)\LL_r\omega_s^\infty(g^\infty).
\end{align*}
Fix a nontrivial additive character $\psi:F \backslash \A_F \to \CC^\times$ and choose $d \in \A_F^{\times}$ such that $\psi^\infty(d \cdot)$ is unramified and $d_\infty \in A_{\GG_m},$ the copy of $\RR_{>0}$ embedded diagonally in $F_\infty^\times$.  We assume moreover that $\omega_s(d^NI_m)=|d^\infty|^{ns}$.
 Choose $\sigma \in \RR$ and let
\begin{align*}
f_1(g)&=f\LL_r\omega_{\sigma},\\
f_2(g)&=|d^{\infty}|^{-n}\mathcal{F}_{r,\psi}(f)\mathbb{L}_r\omega_{\sigma}(d^NI_m g).  
\end{align*}
  The role of the $d$ here is explained by Conjecture \ref{conj:Langl} below.

The key consequence of Theorem \ref{thm-lfe2} we use here is the following corollary:

\begin{cor} \label{cor:L1}  If $\sigma$ is sufficiently large then  $f_1,f_2 \in \mathcal{C}(G(\A_F),G(\widehat{\OO}_F))$. 
\end{cor}

\begin{proof} 
For $\sigma$ sufficiently large one has
\begin{align}
\int_{G(\A_F^\infty)}|\LL_r(g)|\omega_\sigma(g)dg<\infty
\end{align}
by \cite[Proposition 3.11]{WWLiSat}. Combining this with Theorem \ref{thm-lfe2} we immediately deduce the corollary.
\end{proof}

For a $G(F)$-conjugacy class $\mathfrak{o}$ in $G(F)$ and an $h \in \mathcal{C}(G(\A_F),K)$ Finis and Lapid \cite{FinisLapidContinuityGeometric} have shown that one can define the noninvariant orbital integral
$
J_{\mathfrak{o}}(h).
$
Technically speaking they work with a slightly coarser notion than conjugacy, but it reduces to conjugacy for the case at hand since we have assumed $G$ is a general linear group.

Together with M\"uller \cite{FinisLapidMullerAbsConv} they have also shown that for functions in the same space one can define a trace
\begin{align} \label{trace}
\int_{i\mathfrak{a}^*_{L_s}}\mathrm{tr}(\mathcal{M}_L(P,\lambda)M(P,s)\rho(P,\lambda,h)) d\lambda
\end{align}
that is again absolutely convergent.  Unfortunately, it would take several pages to define the notation used in \eqref{trace}, we refer the reader to \cite[Corollary 1]{FinisLapidMullerAbsConv} and the discussion preceding it.  This is the contribution of the Levi subgroup $L_s$ to the trace formula.

Call a parabolic subgroup of $G$ standard if it contains $T$.  For each standard parabolic subgroup $P$ let $M_P$ be the unique Levi subgroup of $P$ containing $T$.  For a cuspidal automorphic representation $\pi$ of $A \backslash G(\A_F)$ let
$$
\pi_s:=\pi |\omega|^{s/N}.
$$
The transfer $r(\pi)$ of $\pi$ to $\GL_n(\A_F)$ is an irreducible admissible representation (which of course, we do not yet know to be automorphic).  We let $\omega_{r(\pi)}$ be its central character.  

Using Corollary \ref{cor:L1} we can now apply the work of Finis, Lapid and M\"uller to prove the following theorem:

\begin{thm}
Consider
\begin{align} \label{spec:side2}
\sum_{\pi} \left(\frac{1}{2\pi i}\int_{\mathrm{Re}(s)=\sigma}\left( \mathrm{tr}\,\pi_{s}(f\LL_{r}) - \frac{|d^\infty|^{n(s-1)}}{\omega_{r(\pi)}(d^\infty)}\mathrm{tr}\,(\pi^\vee)_{s}(\mathcal{F}_{r,\psi}(f) \LL_r)\right)ds \right)
\end{align}
where the sums are over isomorphism classes of cuspidal automorphic representations of $A \backslash G(\A_F)$.  
It is equal to 
\begin{align}\label{to:show}
\sum_{\mathfrak{o}} J_{\mathfrak{o}}(f_1-f_2)
-\sum_{[P] \neq G} \frac{1}{|W(M_P)|} \sum_{s \in W(M_P)} \iota_s \int_{i\mathfrak{a}_{L_s}}\mathrm{tr}(\mathcal{M}_L(P,\lambda)M(P,s)\rho(P,\lambda,f_1-f_2)d\lambda
\end{align}
where the sum over $\mathfrak{o}$ is over conjugacy classes in $G(F)$, 
the sum over $[P]$ is over associate classes of standard proper parabolic subgroups of $G$, $W(M_P)$ is the Weyl group of $M_P$ in $G$, and $\iota_s$ is the normalizing factor of \cite[Corollary 1]{FinisLapidMullerAbsConv}.
Moreover for each $i$ and each $[P],s$
\begin{align*}
\sum_{\mathfrak{o}} |J_{\mathfrak{o}}(f_i)|<\infty
\textrm{ and }
\int_{i\mathfrak{a}_{L_s}}|\mathrm{tr}(\mathcal{M}_L(P,\lambda)M(P,s)\rho(P,\lambda,f_i)|d\lambda<\infty.
\end{align*}
\end{thm}

\begin{rem} The sum over $[P]$ and $s$ above is finite.  
\end{rem}

\begin{proof} Let $h \in \mathcal{C}(G(\A_F),G(\widehat{\OO}_F))$.
In \cite{FinisLapidMullerAbsConv} and \cite{FinisLapidContinuityGeometric} the authors extended the Arthur-Selberg trace formula to obtain the equality
\begin{align} \label{Arthur}
\sum_{\mathfrak{o}} J_{\mathfrak{o}}(h)
=\sum_{[P]} \frac{1}{|W(M_P)|} \sum_{s \in W(M_P)} \iota_s \int_{i\mathfrak{a}_{L_s}}\mathrm{tr}(\mathcal{M}_L(P,\lambda)M(P,s)\rho(P,\lambda,h)d\lambda
\end{align}
together with the absolute convergence of the sum over $\mathfrak{o}$ and the integral over $i\mathfrak{a}_{L_s}$.  Here the sum is over all association classes of standard parabolic subgroups of $G$, including $G$ itself.  Thus the absolute convergence statements in the theorem follow.

Provided that the measure on $A$ is chosen appropriately the contribution of $[P]=G$ to the spectral side of \eqref{Arthur} here is just
$$
\sum_{\pi} \frac{1}{2\pi i}\int_{\mathrm{Re}(s)=0}\mathrm{tr}\,\pi_s(h) ds,
$$
where the sum is over isomorphism classes of cuspidal automorphic representations of $A \backslash G(\A_F)$.  Thus pulling the contribution of the $[P]=G$ summand to one side we see that the quantity \eqref{to:show} is equal to 
\begin{align} \label{f1f2}
\sum_{\pi} \frac{1}{2\pi i}\int_{\mathrm{Re}(s)=0}\left(\mathrm{tr}\,\pi_s(f_1)-\mathrm{tr}\,\pi^\vee_s(f_2) \right)ds
\end{align}
where the sums are over isomorphism classes of cuspidal automorphic representations of $A \backslash G(\A_F)$.  We now note that $\omega_{r(\pi)}(aI_n)=\omega_{\pi}(a^NI_m)$. It follows from this and our choice of $d$ that \eqref{f1f2} is equal to \eqref{spec:side2}, proving the theorem.
\end{proof}

In the remaining pages of this paper we prove that \eqref{spec:side2} is equal to \eqref{res:sum} assuming a special case of Langlands' functoriality.  Before we do this we emphasize again that it makes perfect sense to study \eqref{spec:side2} using \eqref{to:show} without assuming any conjecture, and the author hopes that some progress towards the conjecture we are about to state can be made by proceeding in this manner.  

Here is the conjecture we will invoke:

\begin{conj}[Langlands] \label{conj:Langl} Let $\psi:F \backslash \A_F \to \CC^\times$ be a nontrivial character, and choose $d^\infty \in \A_F^{\infty \times }$ such that $x \mapsto \psi(d^\infty x)$ is unramified at every finite place.  For each everywhere unramified cuspidal automorphic representation $\pi$ of $G(\A_F)^1$ the function  
$$
L(s,\pi^\infty,r)
$$
admits a meromorphic continuation to the plane, holomorphic except for a possible pole at $s=1$.  It satisfies the functional equation
$$
L(s,\pi^\infty,r)=\omega_{r(\pi)}(d^\infty)^{-1}|d^\infty |^{n(1-s)}\gamma(s,\pi_\infty,r,\psi_\infty)L(1-s,\pi^{\infty \vee},r).
$$ 
\end{conj}

In stating the conjecture in this manner we are using the fact that
\begin{align*}
\varepsilon(s,\pi^\infty,r,\psi^\infty):&=\varepsilon(s,r(\pi^\infty),\psi^{\infty})\\&=\omega_{r(\pi)}(d^\infty)^{-1}|d^\infty|^{n(1-s)}\varepsilon(s,r(\pi^\infty),\psi^\infty(d^\infty \cdot))=\omega_{r(\pi)}(d^\infty)^{-1}|d^\infty|^{n(1-s)}
\end{align*}
(compare \cite[(3.2.3)]{Tate_NT})  where $r(\pi^\infty)$ is the transfer of $\pi^\infty$ to $\GL_m(\A_F^\infty)$.  It is known to exist as an admissible representation.

\begin{thm} \label{thm:reint}
  If Conjecture \ref{conj:Langl} is true for all unramified cuspidal automorphic representations of $A \backslash G(\A_F)$ then \eqref{spec:side2} is equal 
to the absolutely convergent sum
\begin{align} \label{res}
\sum_{\pi} \mathrm{Res}_{s=1}\mathrm{tr}\,\pi_{\infty s}(f)L(s,\pi^\infty,r).
\end{align}
\end{thm}
 
\begin{rem}
We remark that the proof of Theorem \ref{thm:reint} requires Theorem \ref{thm-lfe} in particular at the archimedean places; working outside a finite set of places including the archimedean ones where one can assume an unramified functional equation is not enough.
\end{rem}

For an admissible irreducible representation $\pi$ of $G(\A_F)$ let $\mathcal{C}(\pi,\mathrm{Re}(s))$ be its analytic conductor as defined by Iwaniec and Sarnak (\cite[\S 1]{BrumleyNarrow} is a nice reference).  We have the following corollary of Conjecture \ref{conj:Langl}:
\begin{cor} \label{cor-Langl} Assume Conjecture \ref{conj:Langl} for the unramified cuspidal automorphic representation $\pi$.  For any real numbers
 $A<B$ and $A \leq \mathrm{Re}(s) \leq B$ one has an estimate
$$
(s-1)^{\mathrm{ord}_{s=1}L(s,\pi^\infty,r)}L(s,\pi^\infty,r) \ll_{A,B,M} \mathcal{C}(\pi,\mathrm{Im}(s))^M
$$
for some $M>0$. 
\end{cor}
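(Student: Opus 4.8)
The argument is the familiar convexity (Phragm\'en--Lindel\"of) one. Write $d:=\mathrm{ord}_{s=1}L(s,\pi^\infty,r)$ and $\Lambda(s):=(s-1)^{d}L(s,\pi^\infty,r)$; by Conjecture \ref{conj:Langl} the function $\Lambda$ is entire. The plan is to estimate $\Lambda$ on a pair of vertical lines lying far to the right and far to the left of the strip $\{A\le\mathrm{Re}(s)\le B\}$, and then to interpolate. The estimates produced carry a factor $(1+|\mathrm{Im}\,s|)^{O(1)}$ which is harmless for the use in Proposition \ref{prop:reint} --- there $\mathrm{tr}\,\pi_{\infty s}(f)$ decays rapidly in $\mathrm{Im}\,s$ --- and which, if one insists on a bound uniform in $\mathrm{Im}\,s$, is absorbed by replacing $\mathcal{C}(\pi)$ with $\mathcal{C}(\pi,\mathrm{Im}\,s)\asymp\mathcal{C}(\pi)(1+|\mathrm{Im}\,s|)^{O(1)}$.

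First I would fix a real number $\sigma_1>\max(B,1)$, depending on $r$ and $F$ only, large enough that the Euler product defining $L(s,\pi^\infty,r)$ converges absolutely on $\mathrm{Re}(s)\ge\sigma_1$. This is legitimate because $G=\GL_m$, so the known bounds toward Ramanujan for automorphic representations of $\GL_m$ give $\|r(\mathrm{Frob}_v)\|\ll q_v^{\theta}$ with $\theta$ depending only on $r$; on $\mathrm{Re}(s)=\sigma_1$ one then has $|L(s,\pi^\infty,r)|\ll 1$, hence $|\Lambda(s)|\ll(1+|\mathrm{Im}\,s|)^{d}$. On the reflected line $\mathrm{Re}(s)=1-\sigma_1$ the functional equation of Conjecture \ref{conj:Langl} gives $|L(s,\pi^\infty,r)|=|\gamma(s,\pi_\infty,r)|\,|L(1-s,\pi^{\infty\vee},r)|\ll|\gamma(s,\pi_\infty,r)|$, since $\mathrm{Re}(1-s)=\sigma_1$ lies in the region of absolute convergence and $\mathcal{C}(\pi^\vee)=\mathcal{C}(\pi)$. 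Writing $\gamma(s,\pi_\infty,r)$ as an $\varepsilon$-factor (of modulus bounded on the strip, uniformly in $\pi$) times a ratio of products of $\Gamma_\RR$- and $\Gamma_\CC$-factors in $s$ shifted by the archimedean Langlands parameters of $\pi$, Stirling's formula gives $|\gamma(s,\pi_\infty,r)|\ll\mathcal{C}(\pi)^{C}(1+|\mathrm{Im}\,s|)^{C}$ on $\mathrm{Re}(s)=1-\sigma_1$ for a constant $C=C(\sigma_1,r)$. Thus on both lines $\mathrm{Re}(s)=\sigma_1$ and $\mathrm{Re}(s)=1-\sigma_1$ the function $\Lambda$ is bounded by a fixed power of $\mathcal{C}(\pi)(1+|\mathrm{Im}\,s|)$.

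I would finish by invoking the Phragm\'en--Lindel\"of principle on the strip $1-\sigma_1\le\mathrm{Re}(s)\le\sigma_1$, which contains $[A,B]$: $\Lambda$ is holomorphic there and, by the previous paragraph, bounded by a power of $\mathcal{C}(\pi)(1+|\mathrm{Im}\,s|)$ on each edge, so convexity yields a bound of the same shape throughout the strip, with exponent a convex combination of the two edge exponents; restriction to $A\le\mathrm{Re}(s)\le B$ is the corollary. The point that requires genuine care, and which I expect to be the main obstacle, is the a priori growth hypothesis needed to apply Phragm\'en--Lindel\"of, namely that $\Lambda$ has finite order (say $|\Lambda(s)|\ll_\varepsilon\exp(|s|^{1+\varepsilon})$) throughout the strip. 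Outside the strip this is immediate from the estimates just obtained --- Stirling controls $\gamma(s,\pi_\infty,r)$ for $\mathrm{Re}(s)\le1-\sigma_1$, while $L(s,\pi^\infty,r)$ is $O(1)$ for $\mathrm{Re}(s)\ge\sigma_1$ --- and inside the strip it is the classical fact that an $L$-function possessing an analytic continuation, a functional equation, and a Dirichlet series representation in a half-plane is of finite order in vertical strips. I would either take this finite-order property as part of the analytic package attached to $r$ alongside Conjecture \ref{conj:Langl}, or derive it by the usual Borel--Carath\'eodory argument; in either case it merits being spelled out.
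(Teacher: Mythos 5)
Your argument is correct and is essentially the paper's: the paper simply remarks that $\mathcal{C}(r(\pi)) \ll_N \mathcal{C}(\pi)^N$ for some $N$ depending only on $r$ and then cites the standard preconvexity argument from Brumley, which is exactly the Phragm\'en--Lindel\"of interpolation you carry out. The one step the paper singles out and that you should make explicit rather than elide is precisely that inequality $\mathcal{C}(r(\pi)) \ll \mathcal{C}(\pi)^N$: the Gamma factors controlled by Stirling on $\mathrm{Re}(s)=1-\sigma_1$ involve the archimedean parameters of $r(\pi_\infty)$, not of $\pi_\infty$, so this polynomial comparison of conductors is what lets you convert the Stirling bound into a power of $\mathcal{C}(\pi)$ as the corollary asserts.
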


\begin{proof}
Notice that $\mathcal{C}(r(\pi),\mathrm{Im}(s)) \ll_N \mathcal{C}(\pi,\mathrm{Im}(s))^N$ for some integer $N$ depending only on $r$.  Thus, since we are assuming Conjecture \ref{conj:Langl}, to prove the corollary it suffices to prove that 
$$
(s-1)^{\mathrm{ord}_{s=1}L(s,\pi^\infty,r)}L(s,\pi^\infty,r)  \ll_{A,B,M} \mathcal{C}(r(\pi),\mathrm{Im}(s))^M.
$$
This is a standard preconvexity estimate, see, e.g., \cite[(10)]{BrumleyNarrow}. 
\end{proof}
\begin{proof}[Proof of Theorem \ref{thm:reint}]

Since $f \in C_c^\infty(G(F_\infty))$ the trace $\mathrm{tr}\,\pi_{\infty s}(f)$ is entire as a function of $s$.  We also note that if $A\leq \mathrm{Re}(s) \leq B$ then $\mathrm{tr}\,\pi_{\infty s}(f)$ is rapidly decreasing as a function of $\mathcal{C}(\pi,\mathrm{Im}(s))$ (see \cite[Lemma 4.4]{GetzApproach}).  Thus for $\sigma$ sufficiently large 
\begin{align} \label{bound-2}
\sum_{\pi} \int_{\mathrm{Re}(s)=\sigma}\left|\mathrm{tr}\,\pi_{s}(f\LL_{r})\right| ds<\infty.
\end{align}
  Applying Corollary \ref{cor-Langl} we see that \eqref{res} also converges absolutely.  

Now consider 
\begin{align*}
\sum_{\pi} \frac{1}{2\pi i}\int_{\mathrm{Re}(s)=\sigma} \frac{|d^\infty|^{n(s-1)}}{\omega_{r(\pi)}(d^\infty)}\mathrm{tr}\,(\pi^\vee)_{s}(\mathcal{F}_{r,\psi}(f) \LL_r)ds.  
\end{align*}
By Theorem \ref{thm-lfe2} and Conjecture \ref{conj:Langl} this is equal to 
\begin{align} \label{apply-conj}
\nonumber \sum_{\pi} \frac{1}{2\pi i}\int_{\mathrm{Re}(s)=\sigma} &\frac{|d^\infty|^{n(s-1)}}{\omega_{r(\pi)}(d^\infty )}\gamma(1-s,\pi_\infty,r,\psi)\mathrm{tr}\,\pi_{\infty 1-s}(f)
\mathrm{tr}\,(\pi^{\infty \vee})_s(\LL_r)ds\\
\nonumber &=  
\sum_{\pi} \frac{1}{2\pi i}\int_{\mathrm{Re}(s)=\sigma} \mathrm{tr}\,\pi_{\infty 1-s}(f)
L(1-s,\pi^\infty,r)ds\\
&=\sum_{\pi} \frac{1}{2\pi i}\int_{\mathrm{Re}(s)=1-\sigma} \mathrm{tr}\,\pi_{\infty s}(f)
L(s,\pi^\infty,r)ds.
\end{align}
Applying Corollary \ref{cor-Langl} and \eqref{bound-2} we deduce that this converges absolutely.  We now shift the contour in \eqref{apply-conj} to the line $\mathrm{Re}(s)=\sigma$, picking up the contribution of \eqref{res} from the poles at $s=1$, and deduce the theorem.
\end{proof}

\section*{Acknowledgements}

The author thanks W.~Casselman and F.~Shahidi for useful conversations.  In particular, the observation that the conjecture of Braverman and Kazhdan considered in this paper ought to follow from Paley-Wiener theorems on reductive groups was made by Casselman in an email to the author.  Moreover, W-W. Li and Shahidi deserve thanks for their comments on an earlier version of this paper.
The author appreciates S.~Cheng's help with references, D.~Barbasch and L.~Saper's aid with questions, and H.~Hahn's constant encouragement and help with editing.  V.~Lafforgue also provided many useful suggestions on a previous version of this paper which improved the accuracy and exposition significantly.


\bibliography{../refs}{}

\def\polhk#1{\setbox0=\hbox{#1}{\ooalign{\hidewidth
  \lower1.5ex\hbox{`}\hidewidth\crcr\unhbox0}}}
\begin{thebibliography}{{Che}14}

\bibitem[Alt15]{AltugBEI}
Salim~Ali Altu{\u{g}}.
\newblock Beyond endoscopy via the trace formula: 1. {P}oisson summation and
  isolation of special representations.
\newblock {\em Compos. Math.}, 151(10):1791--1820, 2015.

\bibitem[Ank91]{AnkerSimple}
Jean-Philippe Anker.
\newblock The spherical {F}ourier transform of rapidly decreasing functions.
  {A} simple proof of a characterization due to {H}arish-{C}handra, {H}elgason,
  {T}rombi, and {V}aradarajan.
\newblock {\em J. Funct. Anal.}, 96(2):331--349, 1991.

\bibitem[Art83]{ArthurPWActa}
James Arthur.
\newblock A {P}aley-{W}iener theorem for real reductive groups.
\newblock {\em Acta Math.}, 150(1-2):1--89, 1983.

\bibitem[BCP08]{BarbaschCiubotaruPantano}
Dan Barbasch, Dan Ciubotaru, and Alessandra Pantano.
\newblock Unitarizable minimal principal series of reductive groups.
\newblock In {\em Representation theory of real reductive {L}ie groups}, volume
  472 of {\em Contemp. Math.}, pages 63--136. Amer. Math. Soc., Providence, RI,
  2008.

\bibitem[BK00]{BK-lifting}
A.~Braverman and D.~Kazhdan.
\newblock {$\gamma$}-functions of representations and lifting.
\newblock {\em Geom. Funct. Anal.}, (Special Volume, Part I):237--278, 2000.
\newblock With an appendix by V. Vologodsky, GAFA 2000 (Tel Aviv, 1999).

\bibitem[BNS16]{BouthierNgoSakellaridis}
A.~Bouthier, B.~C. Ng{\^o}, and Y.~Sakellaridis.
\newblock On the formal arc space of a reductive monoid.
\newblock {\em Amer. J. Math.}, 138(1):81--108, 2016.

\bibitem[Bru06]{BrumleyNarrow}
Farrell Brumley.
\newblock Effective multiplicity one on {${\rm GL}\sb N$} and narrow zero-free
  regions for {R}ankin-{S}elberg {$L$}-functions.
\newblock {\em Amer. J. Math.}, 128(6):1455--1474, 2006.

\bibitem[{Che}14]{ChengGlobalTF}
S.~{Cheng}.
\newblock {A Global Trace Formula for Reductive Lie Algebras and the
  Harish-Chandra Transform on the Space of Characteristic Polynomials}.
\newblock {\em ArXiv e-prints}, October 2014.

\bibitem[CN16]{Cheng:Ngo:BK}
S.~{Cheng} and B.~C. {Ngo}.
\newblock {On a conjecture of Braverman and Kazhdan}.
\newblock {\em ArXiv e-prints}, February 2016.

\bibitem[FL11]{FinisLapid2011}
Tobias Finis and Erez Lapid.
\newblock On the continuity of {A}rthur's trace formula: the semisimple terms.
\newblock {\em Compos. Math.}, 147(3):784--802, 2011.

\bibitem[FL16]{FinisLapidContinuityGeometric}
Tobias Finis and Erez Lapid.
\newblock On the {C}ontinuity of the {G}eometric {S}ide of the {T}race
  {F}ormula.
\newblock {\em Acta Math. Vietnam.}, 41(3):425--455, 2016.

\bibitem[FLM11]{FinisLapidMullerAbsConv}
Tobias Finis, Erez Lapid, and Werner M{\"u}ller.
\newblock On the spectral side of {A}rthur's trace formula---absolute
  convergence.
\newblock {\em Ann. of Math. (2)}, 174(1):173--195, 2011.

\bibitem[FLN10]{FLN}
Edward Frenkel, Robert Langlands, and B{\'a}o~Ch{\^a}u Ng{\^o}.
\newblock Formule des traces et fonctorialit\'e: le d\'ebut d'un programme.
\newblock {\em Ann. Sci. Math. Qu\'ebec}, 34(2):199--243, 2010.

\bibitem[Get12]{GetzApproach}
Jayce~R. Getz.
\newblock An approach to nonsolvable base change and descent.
\newblock {\em J. Ramanujan Math. Soc.}, 27(2):143--211, 2012.

\bibitem[{Get}14]{Getz:RSMonoid}
J.~R. {Getz}.
\newblock {A summation formula for the Rankin-Selberg monoid and a nonabelian
  trace formula}.
\newblock {\em submitted}, September 2014.

\bibitem[Get16]{Getz4var}
Jayce~R. Getz.
\newblock A four-variable automorphic kernel function.
\newblock {\em Res. Math. Sci.}, 3:3:20, 2016.

\bibitem[GH15]{GetzHerman}
Jayce~R. Getz and Paul~Edward Herman.
\newblock A nonabelian trace formula.
\newblock {\em Res. Math. Sci.}, 2:2:14, 2015.

\bibitem[GJ72]{GodementJacquetBook}
Roger Godement and Herv{\'e} Jacquet.
\newblock {\em Zeta functions of simple algebras}.
\newblock Lecture Notes in Mathematics, Vol. 260. Springer-Verlag, Berlin-New
  York, 1972.

\bibitem[GL17]{Getz:Liu:BK}
J.~R. {Getz} and B.~{Liu}.
\newblock {A refined Poisson summation formula for certain Braverman-Kazhdan
  spaces}.
\newblock {\em submitted}, July 2017.

\bibitem[GV88]{GangolliVaradarajan}
Ramesh Gangolli and V.~S. Varadarajan.
\newblock {\em Harmonic analysis of spherical functions on real reductive
  groups}, volume 101 of {\em Ergebnisse der Mathematik und ihrer Grenzgebiete
  [Results in Mathematics and Related Areas]}.
\newblock Springer-Verlag, Berlin, 1988.

\bibitem[Her11]{HermanRS}
P.~Edward Herman.
\newblock Beyond endoscopy for the {R}ankin-{S}elberg {$L$}-function.
\newblock {\em J. Number Theory}, 131(9):1691--1722, 2011.

\bibitem[Kna01]{KnappSS}
Anthony~W. Knapp.
\newblock {\em Representation theory of semisimple groups}.
\newblock Princeton Landmarks in Mathematics. Princeton University Press,
  Princeton, NJ, 2001.
\newblock An overview based on examples, Reprint of the 1986 original.

\bibitem[Laf14]{LafforgueJJM}
Laurent Lafforgue.
\newblock Noyaux du transfert automorphe de {L}anglands et formules de
  {P}oisson non lin\'eaires.
\newblock {\em Jpn. J. Math.}, 9(1):1--68, 2014.

\bibitem[Lan04]{LanglandsBE}
Robert~P. Langlands.
\newblock Beyond endoscopy.
\newblock In {\em Contributions to automorphic forms, geometry, and number
  theory}, pages 611--697. Johns Hopkins Univ. Press, Baltimore, MD, 2004.

\bibitem[Lan13]{LanglandsTransfert}
Robert~P. Langlands.
\newblock Singularit\'es et transfert.
\newblock {\em Ann. Math. Qu\'e.}, 37(2):173--253, 2013.

\bibitem[Li16]{WWLiSat}
Wen-Wei Li.
\newblock Basic functions and unramified local {L}-factors for split groups.
\newblock {\em Science China Mathematics}, pages 1--36, 2016.

\bibitem[Ng{\^o}14]{NgoSums}
Bao~Ch{\^a}u Ng{\^o}.
\newblock On a certain sum of automorphic {$L$}-functions.
\newblock In {\em Automorphic forms and related geometry: assessing the legacy
  of {I}. {I}. {P}iatetski-{S}hapiro}, volume 614 of {\em Contemp. Math.},
  pages 337--343. Amer. Math. Soc., Providence, RI, 2014.

\bibitem[Sak12]{SakellaridisSph}
Yiannis Sakellaridis.
\newblock Spherical varieties and integral representations of {$L$}-functions.
\newblock {\em Algebra Number Theory}, 6(4):611--667, 2012.

\bibitem[Sar]{SarnakLetter}
Peter Sarnak.
\newblock Comments on {L}anglands' lecture "{E}ndoscopy and {B}eyond".
\newblock Available at
  \url{http://publications.ias.edu/sarnak/section/513/guide.html}.

\bibitem[Tat79]{Tate_NT}
J.~Tate.
\newblock Number theoretic background.
\newblock In {\em Automorphic forms, representations and {$L$}-functions
  ({P}roc. {S}ympos. {P}ure {M}ath., {O}regon {S}tate {U}niv., {C}orvallis,
  {O}re., 1977), {P}art 2}, Proc. Sympos. Pure Math., XXXIII, pages 3--26.
  Amer. Math. Soc., Providence, R.I., 1979.

\bibitem[Tro81]{TrombiHarmonic}
P.~C. Trombi.
\newblock Harmonic analysis of {$\mathcal{ C}\sp p(G\colon F)(1\leq p<2)$}.
\newblock {\em J. Funct. Anal.}, 40(1):84--125, 1981.

\bibitem[Tro90]{TrombiUniform}
P.~C. Trombi.
\newblock Uniform asymptotics for real reductive {L}ie groups.
\newblock {\em Pacific J. Math.}, 146(1):131--199, 1990.

\bibitem[TV71]{TrombiVaradarajanSpherical}
P.~C. Trombi and V.~S. Varadarajan.
\newblock Spherical transforms of semisimple {L}ie groups.
\newblock {\em Ann. of Math. (2)}, 94:246--303, 1971.

\bibitem[Ven04]{VenkateshThesis}
Akshay Venkatesh.
\newblock ``{B}eyond endoscopy'' and special forms on {GL}(2).
\newblock {\em J. Reine Angew. Math.}, 577:23--80, 2004.

\bibitem[Vog81]{Vogan:Rep:real}
David~A. Vogan, Jr.
\newblock {\em Representations of real reductive {L}ie groups}, volume~15 of
  {\em Progress in Mathematics}.
\newblock Birkh\"auser, Boston, Mass., 1981.

\bibitem[Whi14]{WhiteBE}
Paul-James White.
\newblock The base change {$L$}-function for modular forms and {B}eyond
  {E}ndoscopy.
\newblock {\em J. Number Theory}, 140:13--37, 2014.

\end{thebibliography}
\bibliographystyle{alpha}

\end{document}